\newtheorem{remark}{Remark}[section]
\newtheorem{theorem}{Theorem}[section]
\newtheorem{lemma}{Lemma}[section]
\numberwithin{equation}{section}
\newcommand{\aver}[1]{\left\{\!\!\left\{#1\right\}\!\!\right\}}
\newcommand{\jump}[1]{\left[\!\left[#1\right]\!\right]}
\newcommand{\bigjump}[1]{\left[\!\!\left[#1\right]\!\!\right]}
\begin{document}

\title{{\it A Priori} Error Estimates for Some Discontinuous Galerkin \\
Immersed Finite Element Methods
\footnote{Key words: immersed finite element, discontinuous Galerkin, Cartesian mesh, interface problems, local mesh refinement}
\footnote{This work is partially supported by NSF grant DMS-1016313} }
\date{}

\author{
Tao Lin
\footnote{Department of Mathematics, Virginia Tech, Blacksburg, VA 24061, tlin@math.vt.edu}, \quad
Qing Yang\footnote{School of Mathematical Science, Shandong Normal University, Jinan 250014, P. R. China},\quad
Xu Zhang\footnote{Department of Mathematics, Purdue University, West Lafayette, IN, 47907, xuzhang@purdue.edu} }

\maketitle

\abstract
{In this paper, we derive {\it a priori} error estimates for a class of interior penalty discontinuous Galerkin (DG) methods using immersed finite element (IFE) functions for a classic second-order elliptic interface problem. The error estimation shows that these methods can converge optimally in a mesh-dependent energy norm. The combination of IFEs and DG formulation in these methods allows local mesh refinement in the Cartesian mesh structure for interface problems. Numerical results are provided to demonstrate the convergence and local mesh refinement features of these DG-IFE methods.
}
\section{Introduction}
Let $\Omega$ be a rectangular domain in $\mathbb{R}^2$, and let $\Gamma\subset\Omega$ be a smooth curve separating $\Omega$ into two sub-domains $\Omega^-$ and $\Omega^+$ with $\Omega^-\cap\Omega^+=\emptyset$ (see the first plot in Figure \ref{fig: domain}).
We consider the following typical elliptic interface problem
\begin{eqnarray}
-\nabla\cdot(\beta\nabla u)=f,&\text{in}&\Omega^+\cup\Omega^-, \label{eq: pde}\\
  u = 0, &\text{on}&\partial\Omega,\label{eq: bc}
\end{eqnarray}
where the diffusion coefficient $\beta$ is a positive piecewise constant function:
\begin{equation}\label{eq: coef jump}
  \beta(X)=\left\{
\begin{array}{ll}
\beta^-,& X\in\Omega^-,\\
\beta^+,& X\in\Omega^+.
\end{array}
\right.
\end{equation}
According to the conservation laws, the following jump conditions are required on the interface:
\begin{eqnarray}
 \jump{u}_{\Gamma}&=&0, \label{eq: jump1}\\
 \bigjump{\beta\frac{\partial u}{\partial \mathbf{n}}}_{\Gamma}&=&0.\label{eq: jump2}
\end{eqnarray}

Interface problems arise in many applications where mathematical simulations are carried out in a domain containing multiple materials. The elliptic interface problem \eqref{eq: pde} - \eqref{eq: jump2} considered in this article appears frequently because the involved differential equation captures many basic physical phenomenons.
A wide variety of numerical methods have been developed for interface problems, among which the finite element methods are advantageous for their capability to handle simulation domains with complicated geometry. It is well-known that conventional finite element methods generally require the mesh to fit the interface geometry
(see the second plot in Figure \ref{fig: domain}); otherwise, the convergence cannot be guaranteed \cite{Babuska_Osborn_Bad_FEM,JBramble_JKing_FEM_Interface,ZChen_JZou_FEM_Elliptic}. However, for a problem with a complicated material interface, constructing a satisfactory body-fitting mesh is often costly, and this burden becomes more severe if the simulation involves a moving interface \cite{XHe_TLin_YLin_XZhang_Moving_CNIFE, TLin_YLin_XZhang_IFE_MoL, Rangarajan_Lew_Universal_Mesh} because the mesh has to be generated repeatedly according to each interface location to be considered. In addition, some simulations, such as the particle-in-cell (PIC) method \cite{Birdsall_Langdon_Plasma_Physics,Kafafy_Wang_Whole_Ion_HG_IFE_PIC,Wang_He_Cao_IEEE}, can be carried out more efficiently on structured/Cartesian meshes.
Due to these reasons, a wide variety of numerical methods based on Cartesian meshes have been developed. For an overview of these methods, we refer to
\cite{Osher_Ghost_Fluid_1999, Hansbo_Hansbo_FEM_Elliptic, Hou_Wu_Multiscale_FE_Elliptic, ZLi_KIto_IIM, Moes_Dolbow_Belytschko_XFEM, CPeskin_IBM_2002} and the references therein.

Immersed finite element (IFE) methods were recently introduced for solving interface problems. The main feature of IFE methods is that they can use meshes independent of the interface location, \emph{i.e.}, they allow interface to cut through the interior of elements in a mesh (see the last two plots in Figure \ref{fig: domain}). Hence, Cartesian (triangular or rectangular) meshes can be preferably employed in IFE methods to solve interface problems. We refer the readers to \cite{
Chou_Kwak_Wee_IFE_Triangle_Analysis,
Kwak_Wee_Chang_Broken_P1_IFE,
ZLi_IIM_FE,
ZLi_TLin_YLin_RRogers_linear_IFE,
ZLi_TLin_XWu_Linear_IFE} for more features about IFE methods based on triangular meshes, and
\cite{
XHe_TLin_YLin_Bilinear_Approximation,
XHe_TLin_YLin_Convergence_IFE,
TLin_YLin_RRogers_MRyan_Rectangle,
SVallaghe_TPapadopoulo_TriLinear_IFE}
for IFE methods based on rectangular meshes. We note that IFE methods in these literatures are applied in the continuous Galerkin formulation.

\par
\begin{figure}[htb]\label{fig: domain}
\centering
\setlength{\unitlength}{1.5mm}
\includegraphics[totalheight=1.2in]{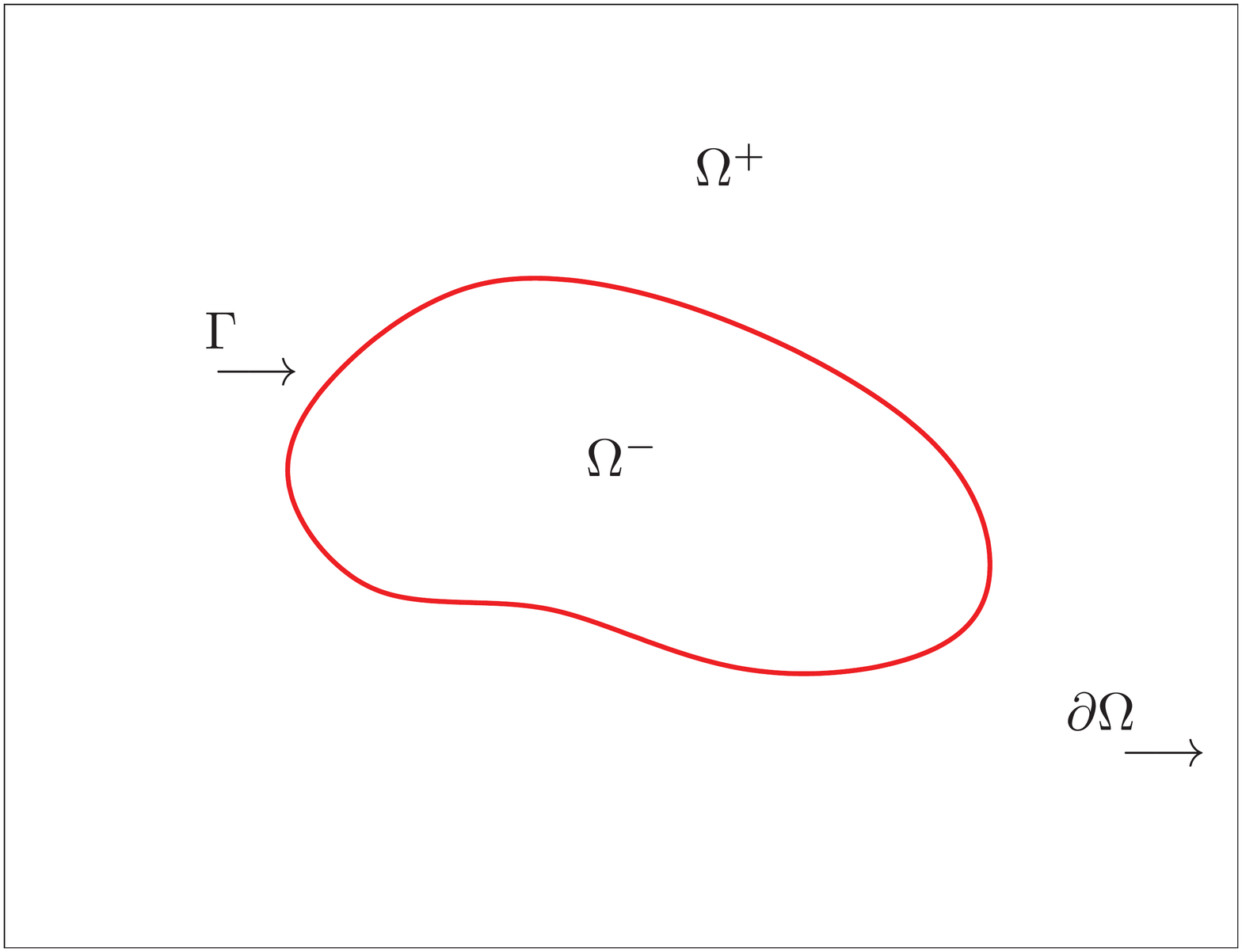}
\includegraphics[totalheight=1.2in]{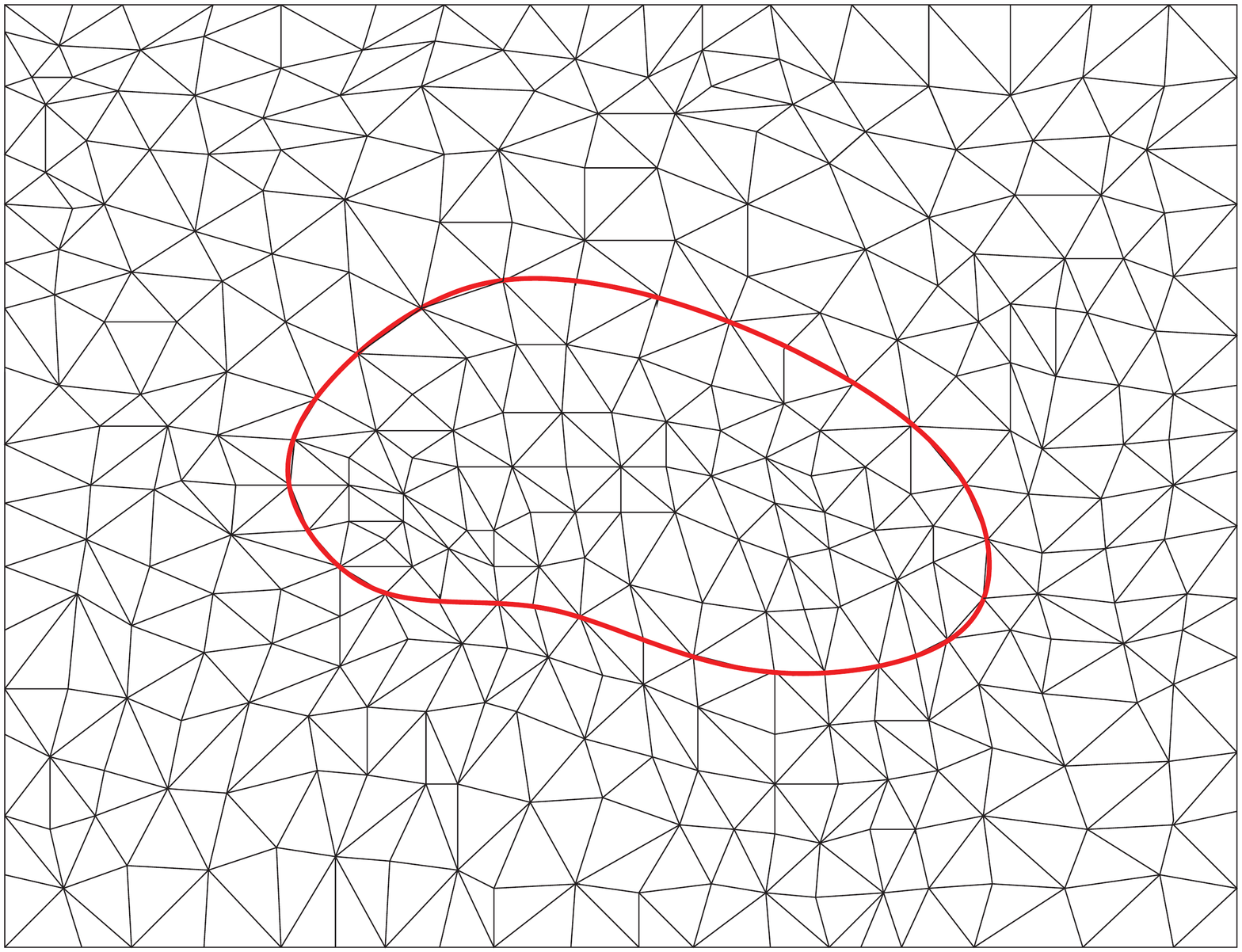}
\includegraphics[totalheight=1.2in]{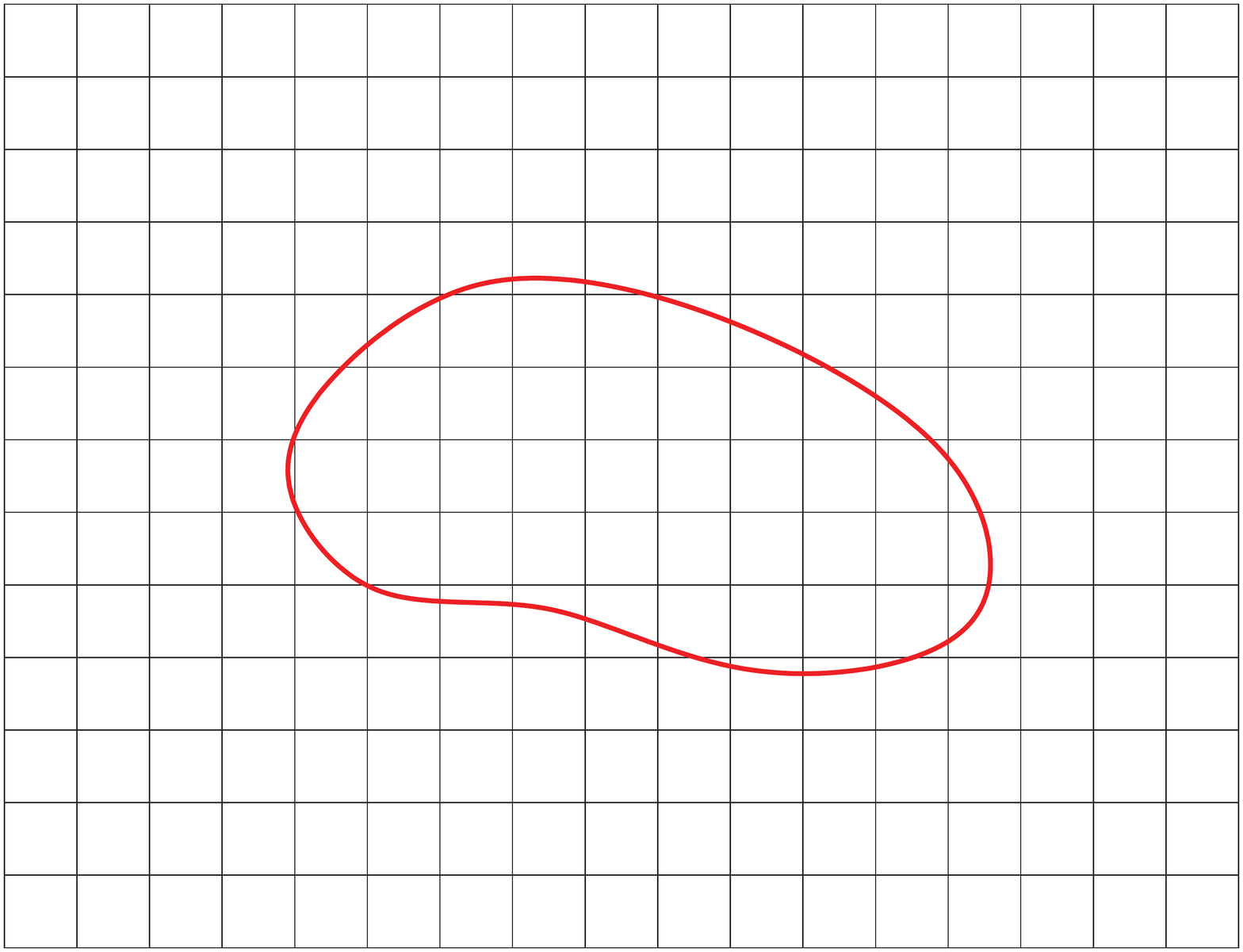}
\includegraphics[totalheight=1.2in]{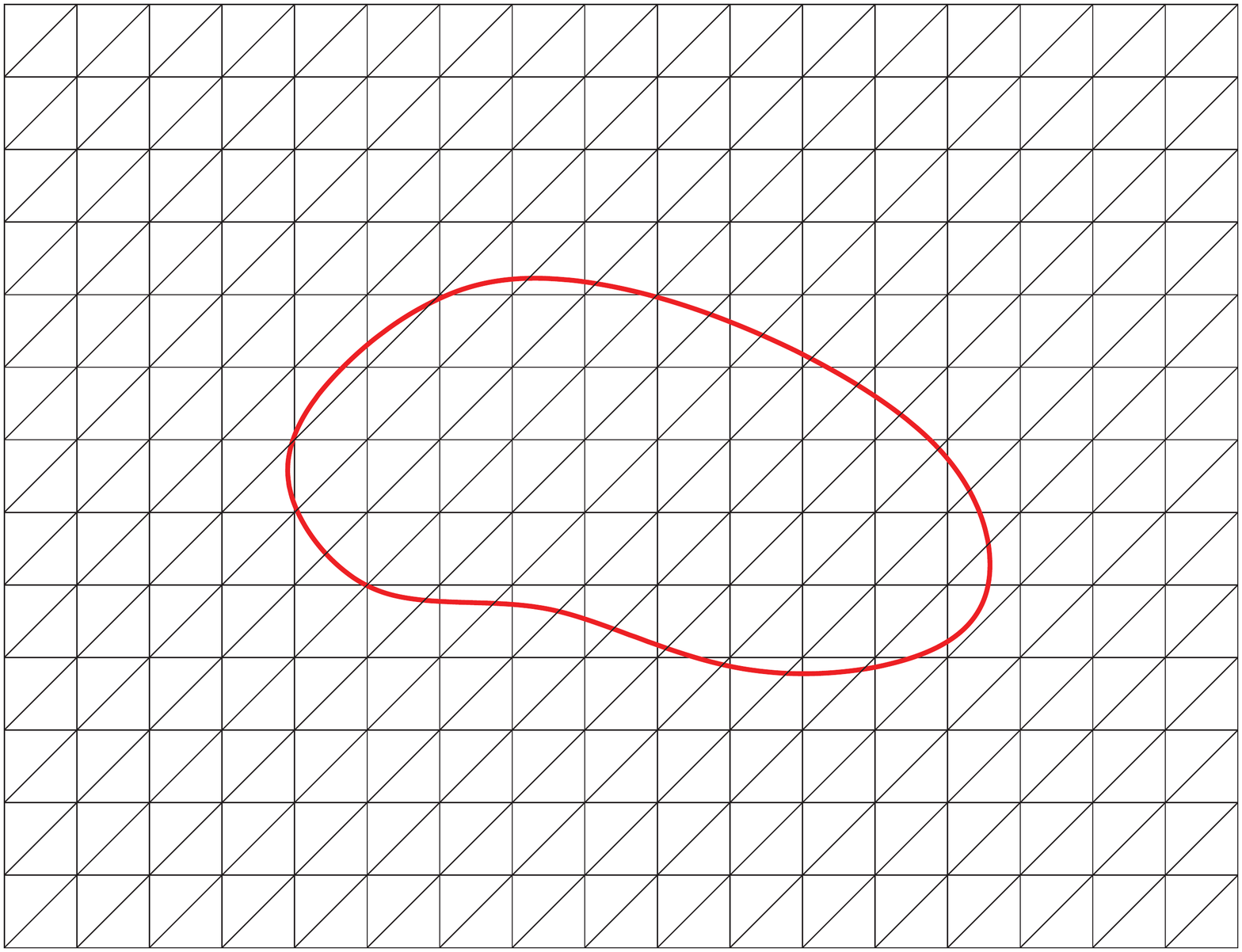}
\caption{\small from left: a simulation domain, a body-fitting triangular mesh, a non-body-fitting Cartesian mesh, and a non-body-fitting triangular mesh.}
\end{figure}

The discontinuous Galerkin (DG) methods for elliptic boundary value problems can be traced back to 1970s (see \cite{Babuska_Zlamal_Nonconform_FEM_Penalty, Reed_Hill_Triangular_Transport_EQ}) and they become increasingly popular recently as indicated by these survey articles and books \cite{Arnold_Brezzi_Cockburn_Marini_Unified_Analysis_DG, Cockburn_Karniadakis_Shu_DG_Book, Hesthaven_Warburton_Nodal_DG, Riviere_DG_book}. Because there is no continuity imposed on the approximating function across the element boundary, DG methods can locally perform $h$-, $p$-, and $hp$- refinement flexibly and efficiently. For elliptic and parabolic equations, the interior penalty DG (IPDG) methods \cite{Arnold_IPFE, Douglas_Dupont_Penalty_Elliptic_Parabolic, Riviere_Wheeler_Girault_DG, Wheeler_FE_IP} are well understood and widely used. The main feature of IPDG methods is that penalty terms are added on interior edges to stabilize the bilinear form of the scheme, so that the linear system is positive definite. In \cite{XHe_Thesis_Bilinear_IFE,XHe_TLin_YLin_DG}, the IFE and IPDG ideas were combined together for solving interface problems on Cartesian meshes with local refinement capability. To alleviate the issue of higher degrees of freedom in usual DG formulation, authors in \cite{He_Lin_Lin_Selective_DG} considered the so-called {\it selective} DG-IFE methods that employ DG formulation in selected elements while using the usual Galerkin formulation in the rest of the solution domain. Numerical examples have demonstrated that these DG-IFE methods can converge optimally, and our goal in this article is to theoretically establish the optimal {\it a priori} error estimates for DG-IFE methods that were discussed in
\cite{XHe_Thesis_Bilinear_IFE,XHe_TLin_YLin_DG,He_Lin_Lin_Selective_DG}.

The rest of the paper is organized as follows. In Section 2, we recall the DG-IFE methods originally proposed in \cite{XHe_Thesis_Bilinear_IFE,XHe_TLin_YLin_DG}. In Section 3, we present {\it a priori} error estimates for these DG-IFE methods. An error estimate in a mesh-dependant energy norm is derived, and this
error estimate is optimal according to the polynomials used in the IFE spaces. In Section 4, numerical experiments are provided to demonstrate features of DG-IFE methods. Brief conclusions are given in Section 5.

\section{Discontinuous Galerkin Immersed Finite Element Methods}
In this paper, we adopt notations and norms of usual Sobolev spaces.
For $r> 1$ and any subset $G\subseteq \Omega$ that is cut through by $\Gamma$,
we use the following function spaces:
\begin{equation*}
  \tilde H^{r}(G)=\{v\in H^1(G):\ v|_{G\cap\Omega^s}\in
H^{r}(G\cap\Omega^s), s=+\ \text{or}\ -\},~~~~~
  \tilde H^{r}_0(G) = \tilde H^{r}(G)\cap H^1_0(G),
\end{equation*}
equipped with the norm
\begin{equation*}
\|v\|^2_{\tilde H^{r}(G)}=\|v\|^2_{H^{r}(G\cap\Omega^-)}+\|v\|^2_{H^{r}(G\cap\Omega^+)},\ \forall v\in
\tilde H^{r}(G).
\end{equation*}
From now on, we use $C$ with or without subscripts to denote generic positive constants, possibly different at different occurrences, but they are independent of the mesh size and interface.

Let $\{\mathcal{T}_h\}$ with $0<h<1$ be a family of triangular or rectangular Cartesian meshes of $\Omega$.
An element cut through by the interface is called an interface element; otherwise, it is called a non-interface element.
For each mesh $\mathcal{T}_h$, we let $\mathcal{T}_h^i$ be the set of interface elements of $\mathcal {T}_h$ and
$\mathcal{T}_h^n$ be the set of non-interface elements. We denote by  $\mathcal{E}_h$ the set of edges of $\mathcal {T}_h$. Also, let $\mathcal {\mathring{E}}_h$ and $\mathcal {E}_h^b$ be the set of interior edges and boundary edges of $\mathcal {T}_h$, respectively.
Similarly, if an edge is cut through by the interface, it is called an interface edge; otherwise, it is called a non-interface edge. Let $\mathcal {E}_h^i$ and  $\mathcal {{E}}_h^n$ be the set of interface edges and non-interface edges, respectively. Moreover, we use $\mathcal {\mathring{E}}_h^i$ and  $\mathcal {\mathring{E}}_h^n$ to denote the set of interior interface edges and interior non-interface edges, respectively. Without loss of generality, we assume that elements in $\mathcal {T}_h$
satisfy the following conditions:
\begin{description}
  \item[(H1)] If one edge of an element meets $\Gamma$ at more than one point, then this edge is part of $\Gamma$.
  \item[(H2)] If $\Gamma$ meets the boundary of an element at two points, then these two points must be on different edges of this element.
\end{description}

For every interface element $K\in \mathcal{T}_h^i$, we assume its boundary intersects with the interface $\Gamma$ at points $D$ and $E$.
Then, the line segment $\overline{DE}$ divides $K$ into two sub-elements $K^+$ and $K^-$
with $K=K^+\cup K^-\cup \overline{DE}$. With a given mesh $\mathcal{T}_h$ on $\Omega$, we define the following broken Sobolev spaces:
\begin{align*}
  \tilde H^2(\mathcal{T}_h)=\{v\in L^2(\Omega): &~ \forall
K\in\mathcal{T}_h^n, v|_K\in H^2(K);\\
   &~ \forall
K\in\mathcal{T}_h^i, v|_{K}\in H^1(K),\ \ v|_{K^s}\in H^2(K^s), s=+,-\}.
\end{align*}
and
\begin{equation*}
  \tilde H_0^2(\mathcal{T}_h)=\{v\in \tilde H^2(\mathcal{T}_h):
v|_{\partial\Omega}=0\}.
\end{equation*}

We now recall some standard notations for describing IPDG methods
\cite{ZChen_FEM, Hesthaven_Warburton_Nodal_DG, Riviere_DG_book}. For each edge $B$, we associate a unit normal vector $\mathbf{n}_B$.
If $B\in\mathcal {\mathring{E}}_h$, we let $K_{B,1}$ and $K_{B,2}$ be two elements
that share $B$ as the common edge and let $\mathbf{n}_B$ be
the outward normal with respect to $K_{B,1}$.
If $B\in \mathcal {E}_h^b$, $\mathbf{n}_B$ is taken to be the unit outward vector normal to
$\partial\Omega$. For a function $u$ defined on $K_{B,1}\cup K_{B,2}$, we denote its average
and jump over $B\in\mathcal {\mathring{E}}_h$ by
\begin{equation*}
\aver{u}_B=\frac{1}{2}((u|_{K_{B,1}})|_B+(u|_{K_{B,2}})|_B),\ \
\jump{u}_B=(u|_{K_{B,1}})|_B-(u|_{K_{B,2}})|_B.
\end{equation*}
If $B$ is a boundary edge, we set
\begin{equation*}
\aver{u}_B=\jump{u}_B=u|_B.
\end{equation*}
For simplicity, we usually drop the subscript $B$ from these notations if there is no danger to
cause any confusion.
\par
To obtain a variational form for the interface problem
\eqref{eq: pde} - \eqref{eq: jump2}, we multiply \eqref{eq: pde} by a test function $v\in
\tilde H_0^2(\mathcal{T}_h)$, integrate both sides on each element
$K\in\mathcal{T}_h$, and apply the Green's formula to have
\begin{equation}\label{eq: Green's formula}
\int_{K}\beta\nabla u\cdot\nabla vdX-\int_{\partial K}\beta\nabla
u\cdot{\mathbf{n}_{K}}vds=\int_KfvdX.
\end{equation}
Note that \eqref{eq: Green's formula} holds regardless whether $K$ is a non-interface element or an interface element. For $K \in \mathcal{T}_h^n$, the derivation follows from the standard procedure. When $K$ is an interface element, \eqref{eq: Green's formula} follows from applying the Green's formula piecewisely over sub-elements of $K$
determined according to the smoothness of $u$ and $v$, then summing up over $K$ and applying the flux continuity \eqref{eq: jump2}. Summarizing (2.1) over all elements we obtain
\begin{equation}\label{eq: weak form 1}
\sum\limits_{K\in\mathcal{T}_h}\int_K\beta\nabla u\cdot \nabla vdX
-\sum\limits_{B\in \mathcal{\mathring E}_h}\int_{B}\aver{\beta\nabla u\cdot \mathbf{n}_B}\jump{v}ds
=\int_{\Omega}fvdX.
\end{equation}
Since the solution $u$ is continuous almost everywhere in $\Omega$, we can assume
\begin{equation}\label{eq: penalty term}
\epsilon\sum\limits_{B\in \mathcal{\mathring E}_h}\int_{B}\aver{\beta\nabla v\cdot \mathbf{n}_B}\jump{u}ds=0,~~~~~
\sum\limits_{B\in \mathcal{\mathring E}_h}\frac{\sigma^0_B}{|B|^{\alpha}}\int_{B}\jump{u}\jump{v}ds=0,
\end{equation}
for any constants $\epsilon$, $\alpha>0$, and $\sigma^0_B\geq 0$. Here $|B|$ denotes the length of
$B$. Adding the two terms in \eqref{eq: penalty term} to \eqref{eq: weak form 1}, we obtain the weak form of interface problem
\eqref{eq: pde} - \eqref{eq: jump2}: Find $u\in \tilde H^2_0(\Omega)$ such that
\begin{equation}\label{eq: weak form}
  a_\epsilon(u,v) = (f,v), ~~~  \forall v\in \tilde H^2_0(\mathcal{T}_h),
\end{equation}
where the bilinear form $a_\epsilon(\cdot,\cdot)$: $H_h(\Omega)\times H_h(\Omega)\rightarrow \mathbb{R}$ is
\begin{eqnarray}
a_\epsilon(w,v)
&=&
\sum\limits_{K\in\mathcal{T}_h}\int_K\beta\nabla w\cdot \nabla v dX
-\sum\limits_{B\in \mathcal{\mathring E}_h}\int_{B}\aver{\beta\nabla w\cdot \mathbf{n}_B}\jump{v}ds\nonumber\\
& &
+\epsilon\sum\limits_{B\in \mathcal{\mathring E}_h}\int_{B}\aver{\beta\nabla v\cdot \mathbf{n}_B}\jump{w}ds
+\sum\limits_{B\in \mathcal{\mathring E}_h}\frac{\sigma^0_B}{|B|^{\alpha}}\int_{B}\jump{w}\jump{v}ds,
\label{eq: DG bilinear form}
\end{eqnarray}
and $H_h(\Omega)=\tilde H_0^2(\Omega)+\tilde H_0^2(\mathcal{T}_h)$. The weak form derived here for the interface problem \eqref{eq: pde}-\eqref{eq: jump2}
is in the same format as the standard weak form used in DG finite element methods for the usual elliptic boundary value problems
\cite{ZChen_FEM, Hesthaven_Warburton_Nodal_DG, Riviere_DG_book}. As suggested by DG finite element methods, the parameter $\epsilon$ is usually chosen as $-1$, $0$, or $1$. Note that the bilinear form $a_\epsilon(\cdot,\cdot)$ is symmetric if $\epsilon=-1$ and is nonsymmetric otherwise.

We now introduce the IFE approximation of the broken space $\tilde H_0^2(\mathcal{T}_h)$.
For every element $K\in\mathcal{T}_h$, denote by $A_i$, $i=1,\cdots,d_K$, the vertices of $K$. Here $d_K = 3$
or $d_K = 4$ depending on whether $K$ is a triangular or rectangular element.
On each non-interface element $K\in\mathcal{T}_h^n$, we let $\psi_i$, $i = 1,\cdots, d_K$ be the standard linear or bilinear finite element nodal basis associated with the vertex $A_i$ of $K$. The local FE space on $K \in\mathcal{T}_h^n$ is the defined as
\begin{equation*}
S_h(K)=span\{\psi_i: 1\leq i\leq d_K\}.
\end{equation*}
On an interface element $K\in \mathcal{T}_h^i$, we let $\phi_i$, $i = 1,\cdots, d_K$ be the linear \cite{ZLi_TLin_YLin_RRogers_linear_IFE, ZLi_TLin_XWu_Linear_IFE} or bilinear \cite{XHe_TLin_YLin_Bilinear_Approximation,TLin_YLin_RRogers_MRyan_Rectangle} IFE nodal basis associated with vertex $A_i$. We let local IFE space on
$K\in \mathcal{T}_h^i$ be
\begin{equation*}
S_h(K)=span\{\phi_i: 1\leq i\leq d_K\}.
\end{equation*}
Then, we define the discontinuous IFE space over the
mesh $\mathcal{T}_h$ as follows:
\begin{equation*}
 S_{h}(\mathcal{T}_h)=\{v\in L^2(\Omega): v|_K\in S_h(K)\},~~~~
 \mathring{S}_{h}(\mathcal{T}_h) = \{v\in S_{h}(\mathcal{T}_h): v|_{\partial \Omega} = 0\}.
\end{equation*}
One can easily see that $\mathring{S}_{h}(\mathcal{T}_h)$ is a subspace of $H_h(\Omega)$.

Finally, we state the DG-IFE methods for the interface problem \eqref{eq: pde} - \eqref{eq: jump2} as: Find $u_h\in \mathring{S}_{h}(\mathcal{T}_h)$ such that
\begin{equation}\label{eq: DG method}
a_\epsilon(u_h,v_h)=(f,v_h),\ \ \forall v_h\in \mathring{S}_{h}(\mathcal{T}_h).
\end{equation}

\section{{\it A Priori} Error Estimation}
In this section, we derive {\it a priori} error estimates for the DG-IFE methods \eqref{eq: DG method} in an energy norm $\|\cdot\|_h: H_h(\Omega) \rightarrow \mathbb{R}$ defined as follows
\begin{equation}\label{eq: energy norm}
\|v\|_h=
\left(\sum\limits_{K\in\mathcal{T}_h}\int_K\beta\nabla v\cdot \nabla vdX
+\sum\limits_{B\in \mathcal{\mathring E}_h}\frac{\sigma_B^0}{|B|^{\alpha}}\int_{B}\jump{v}\jump{v}ds\right)^{1/2}.
\end{equation}
We first present a few lemmas required in the error analysis. By the standard
scaling argument, one can show the following trace inequalities \cite{Riviere_DG_book}:

\begin{lemma}
(Standard trace inequalities)~ Let $K$ be a triangle or rectangle with diameter $h_K$, and $B$ be an edge of $K$. There exists a constant $C$ such that
\begin{eqnarray}
\|v\|_{L^2(B)}&\leq&
C|B|^{1/2}|K|^{-1/2}(\|v\|_{L^2(K)}+h_K\|\nabla
v\|_{L^2(K)}),~~~\forall v\in H^1(K),  \label{eq: trace 1}\\
\|\nabla v\|_{L^2(B)}&\leq&
C|B|^{1/2}|K|^{-1/2}(\|\nabla
v\|_{L^2(K)}+h_K\|\nabla^2 v\|_{L^2(K)}),~~~\forall v\in H^2(K),
\label{eq: trace 2}
\end{eqnarray}
where $\nabla^2 v$ is the Hessian of $v$.
\end{lemma}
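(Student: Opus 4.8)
The plan is to establish both inequalities by the classical pullback-to-a-reference-element argument, for which the single genuinely analytic ingredient is the trace theorem on a fixed reference domain. First I would fix a reference element $\hat K$ --- the unit right triangle when $K$ is a triangle and the unit square when $K$ is a rectangle --- together with a reference edge $\hat B\subset\partial\hat K$. Because every triangle and every rectangle is an affine image of its reference element, there is an affine map $F_K(\hat X)=B_K\hat X+b_K$ with $F_K(\hat K)=K$ and $F_K(\hat B)=B$; I then work with the pulled-back function $\hat v=v\circ F_K$. On the fixed domain $\hat K$ the trace operator $H^1(\hat K)\to L^2(\partial\hat K)$ is bounded, so there is a constant $\hat C$ depending only on $\hat K$ (hence on nothing that varies with the mesh) such that $\|\hat v\|_{L^2(\hat B)}\le\hat C(\|\hat v\|_{L^2(\hat K)}+\|\hat\nabla\hat v\|_{L^2(\hat K)})$ for all $\hat v\in H^1(\hat K)$.

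Next I would record the three scaling identities generated by the change of variables $X=F_K(\hat X)$ and feed them into the reference inequality. Since $|\det B_K|=|K|/|\hat K|$, the area integral scales as $\|\hat v\|_{L^2(\hat K)}=(|\hat K|/|K|)^{1/2}\|v\|_{L^2(K)}$; the one-dimensional stretch factor $|B|/|\hat B|$ along the edge gives $\|v\|_{L^2(B)}=(|B|/|\hat B|)^{1/2}\|\hat v\|_{L^2(\hat B)}$; and the chain rule $\hat\nabla\hat v=B_K^{T}((\nabla v)\circ F_K)$ together with the same Jacobian yields $\|\hat\nabla\hat v\|_{L^2(\hat K)}\le\|B_K\|\,|\det B_K|^{-1/2}\|\nabla v\|_{L^2(K)}$. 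Substituting these into the trace inequality on $\hat K$, absorbing the fixed constants $|\hat K|$ and $|\hat B|$ into $C$, and using the elementary geometric bound $\|B_K\|\le C h_K$ (with $C$ depending only on the reference element) produces exactly
\[
\|v\|_{L^2(B)}\le C|B|^{1/2}|K|^{-1/2}\bigl(\|v\|_{L^2(K)}+h_K\|\nabla v\|_{L^2(K)}\bigr),
\]
which is \eqref{eq: trace 1}.

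Finally, \eqref{eq: trace 2} would follow by applying \eqref{eq: trace 1} componentwise: when $v\in H^2(K)$ each first-order partial derivative $\partial_{x_j}v$ lies in $H^1(K)$, so \eqref{eq: trace 1} bounds $\|\partial_{x_j}v\|_{L^2(B)}$ by $C|B|^{1/2}|K|^{-1/2}(\|\partial_{x_j}v\|_{L^2(K)}+h_K\|\nabla\partial_{x_j}v\|_{L^2(K)})$; summing over $j$ and using $\sum_j\|\nabla\partial_{x_j}v\|_{L^2(K)}^2=\|\nabla^2 v\|_{L^2(K)}^2$ gives the stated estimate. I do not expect a substantive obstacle, since the reference trace theorem is standard and everything else is bookkeeping. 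The only point that demands care is tracking the geometric factors so that the explicit $|B|^{1/2}|K|^{-1/2}$ dependence emerges with a constant $C$ that is genuinely independent of $h_K$ and of the interface; concretely this reduces to verifying $\|B_K\|\le C h_K$ and that $|\det B_K|$ is comparable to $|K|$ uniformly, which holds for triangles and rectangles because each is an affine image of its reference element with the operator norm of $B_K$ controlled by the element diameter.
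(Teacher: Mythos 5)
Your proposal is correct and follows essentially the same route as the paper, which gives no written proof but appeals precisely to ``the standard scaling argument'' (citing Rivi\`ere's book): the affine pullback to a fixed reference element, the bounded trace operator $H^1(\hat K)\to L^2(\partial\hat K)$, the Jacobian and edge-length scaling identities with $\|B_K\|\le Ch_K$, and then \eqref{eq: trace 2} obtained by applying \eqref{eq: trace 1} componentwise to $\nabla v$. Your accounting of the geometric factors correctly produces the explicit $|B|^{1/2}|K|^{-1/2}$ dependence with $C$ independent of the mesh, so there is nothing to add.
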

On an interface element $K\in \mathcal{T}_h^i$, we recall from \cite{ZLi_TLin_XWu_Linear_IFE, TLin_YLin_RRogers_MRyan_Rectangle} that the local IFE space $S_h(K)\subset H^1(K)$. This implies that the trace inequality \eqref{eq: trace 1} is valid for
$v\in S_h(K)$. However, since $S_h(K) \not \subset H^2(K)$ for $K \in \mathcal{T}_h^i$ in general, the second inequality \eqref{eq: trace 2} cannot be applied to functions
in $S_h(K)$. Nevertheless, in \cite{TLin_YLin_XZhang_PPIFE_Elliptic, XZhang_PHDThesis}, this trace inequality has been extended to IFE functions. We recall this result in
the following lemma.

\begin{lemma}
(Trace inequalities for IFE functions)~ Let $\mathcal{T}_h$ be a Cartesian triangular or rectangular mesh and let $K\in \mathcal{T}_h$ be an interface triangle or rectangle with diameter $h_K$ and let $B$ be an edge of $K$. There exists a constant $C$, independent of interface location but depending on the jump of the coefficient $\beta$, such that for every linear or bilinear IFE function $v$ defined on $K$, the following inequality hold
\begin{equation}\label{eq: trace IFE}
  \|\beta \nabla v\cdot \mathbf{n}_B\|_{L^2(B)}\leq
Ch_K^{-1/2}\|\sqrt{\beta}\nabla v\|_{L^2(K)}.
\end{equation}
\end{lemma}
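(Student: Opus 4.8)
The plan is to exploit the fact that an IFE function $v$ is, on each of the two sub-elements $K^+$ and $K^-$, a genuine polynomial (linear or bilinear), and that these two polynomial pieces are tied together by the continuity and flux conditions built into the IFE construction, which mirror \eqref{eq: jump1}--\eqref{eq: jump2}. First I would split the edge $B$ into $B^s = B \cap K^s$ ($s=+,-$); by \textbf{(H2)} the interface enters and leaves $K$ through two different edges, so each $B^s$ is a single subsegment, and on it $\beta\nabla v\cdot\mathbf n_B = \beta^s\nabla v^s\cdot\mathbf n_B$ with $v^s := v|_{K^s}$ a polynomial. Consequently $\|\beta\nabla v\cdot\mathbf n_B\|_{L^2(B)}^2 = \sum_{s}\|\beta^s\nabla v^s\cdot\mathbf n_B\|_{L^2(B^s)}^2$, and it suffices to bound each term separately.

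The geometric ingredients I would record are $|B|\le h_K$, $|K|\ge c\,h_K^2$ (from the shape-regularity of the Cartesian mesh), and the observation that one sub-element, say $K^{s_0}$, always satisfies $|K^{s_0}|\ge \tfrac12|K|$. On this dominant piece the gradient is controlled directly: in the linear case $\nabla v^{s_0}$ is constant, so $\beta^{s_0}|\nabla v^{s_0}|^2\,|K^{s_0}| \le \|\sqrt\beta\,\nabla v\|_{L^2(K)}^2$ yields $|\nabla v^{s_0}|^2 \le C(\beta^{s_0}h_K^2)^{-1}\|\sqrt\beta\,\nabla v\|_{L^2(K)}^2$, and since $\beta^{s_0}\nabla v^{s_0}\cdot\mathbf n_B$ is constant on $B^{s_0}$ with $|B^{s_0}|\le h_K$ this immediately gives the desired $h_K^{-1/2}$ estimate for the $s_0$ term. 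In the bilinear case $\nabla v^{s_0}$ is affine rather than constant, so I would instead extend $v^{s_0}$ to a polynomial $\tilde v^{s_0}$ on all of $K$, apply the standard trace inequality \eqref{eq: trace 2} to this $H^2(K)$ function, absorb the Hessian contribution by an inverse estimate, and pass from $\|\nabla \tilde v^{s_0}\|_{L^2(K)}$ back to $\|\nabla v^{s_0}\|_{L^2(K^{s_0})}$ via the equivalence of $L^2$-norms of low-degree polynomials on $K$ and on the \emph{fat} subregion $K^{s_0}$ (whose area is at least $\tfrac12|K|$).

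The heart of the argument is the \emph{small} sub-element $K^{s_1}$: when this piece is an arbitrarily thin sliver, its own gradient norm carries almost no control, and the naive extension estimate degenerates like $\sqrt{|K|/|K^{s_1}|}$, so it cannot be treated in isolation. This is exactly where the IFE jump conditions are indispensable. Using continuity of $v$ across the chord $\overline{DE}$ (which forces the tangential derivatives of $v^+$ and $v^-$ to agree) together with the flux condition $\beta^+\nabla v^+\cdot\bar{\mathbf n} = \beta^-\nabla v^-\cdot\bar{\mathbf n}$, where $\bar{\mathbf n}$ is the unit normal to $\overline{DE}$ (the IFE analogue of \eqref{eq: jump2}), I would show $|\nabla v^{s_1}|\le C|\nabla v^{s_0}|$ with $C$ depending only on the contrast $\beta^+/\beta^-$, so that the small-piece gradient inherits the control already established on the dominant piece; inserting this into $\|\beta^{s_1}\nabla v^{s_1}\cdot\mathbf n_B\|_{L^2(B^{s_1})}^2\le |B^{s_1}|(\beta^{s_1})^2|\nabla v^{s_1}|^2$ then closes the linear case, and summing the two edge contributions produces the stated bound. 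I expect this transfer of control from the large to the small sub-element, carried out with a constant \emph{uniform in the interface location}, to be the main obstacle. For the bilinear family it additionally requires using the relation between $\nabla v^+$ and $\nabla v^-$ along $\overline{DE}$ in the (integral) sense in which the flux condition is imposed, combined with the inverse and norm-equivalence estimates above; this is also where the dependence of $C$ on the jump of $\beta$ enters.
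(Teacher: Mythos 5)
The paper never proves this lemma: it is quoted from \cite{TLin_YLin_XZhang_PPIFE_Elliptic, XZhang_PHDThesis}, so your attempt can only be measured against the argument in those references. Your plan is, in essence, a correct reconstruction of that argument: split $B$ at the intersection point (consistent for the $\Gamma$- and $\overline{DE}$-partitions, since $D,E\in\partial K$), control the dominant sub-element $|K^{s_0}|\geq\tfrac12|K|$ directly, and transfer control to the sliver through the gradient relation that the IFE jump conditions impose between the two polynomial pieces, with constants depending only on the contrast $\beta^+/\beta^-$. The linear case as you describe it is complete: tangential continuity along $\overline{DE}$ plus $\beta^+\nabla v^+\cdot\bar{\mathbf n}=\beta^-\nabla v^-\cdot\bar{\mathbf n}$ gives $|\nabla v^{s_1}|\leq \max\bigl(1,\beta^{s_0}/\beta^{s_1}\bigr)|\nabla v^{s_0}|$ for the two constant gradients, and the rest is arithmetic. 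Two ingredients you invoke deserve explicit justification since they are exactly where uniformity in the interface location lives: the norm equivalence $\|p\|_{L^2(K)}\leq C\|p\|_{L^2(S)}$ for fixed-degree polynomials and measurable $S\subset K$ with $|S|\geq\tfrac12|K|$ is a Remez-type inequality, valid with $C$ independent of the shape of $S$ (a sliver-shaped $S$ is fine precisely because its relative measure is bounded below).

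For the bilinear case, the step you flag as the main obstacle does go through, but the structural fact that makes it work should be stated: in the bilinear IFE construction the $xy$-coefficients of the two pieces are matched, $d^+=d^-$, which is what makes $w=v^+-v^-$ vanish along all of $\overline{DE}$ (not just at $D$ and $E$) and, more importantly, makes $\nabla v^+-\nabla v^-$ a \emph{constant} vector. Continuity along $\overline{DE}$ then forces $\nabla v^+-\nabla v^-=\lambda\bar{\mathbf n}$, and the integral flux condition determines $\lambda$ as a multiple of the average of $\nabla v^{s_0}\cdot\bar{\mathbf n}$ over $\overline{DE}$ (the average of an affine function), so $|\lambda|\leq C(\beta)\sup_K|\nabla v^{s_0}|$, which your inverse estimate plus the Remez-type equivalence on the fat piece bounds by $C(\beta)h_K^{-1}\|\nabla v^{s_0}\|_{L^2(K^{s_0})}$. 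With that made explicit, the pointwise transfer $\sup_K|\nabla v^{s_1}|\leq C\sup_K|\nabla v^{s_0}|$ holds for bilinears too, and your edge bound closes in both cases. So: no wrong step, and the route matches the cited proof in spirit; the bilinear transfer is the one place where your sketch must be expanded into an actual estimate rather than an expectation.
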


We now describe the interpolation with IFE functions. For $K\in \mathcal{T}_h^n$, the local interpolation operator is defined as $I_{h,K}^n: C(K)\to S_h(K)$:
\begin{equation*}
  I_{h,K}^n u(X) = \sum_{i=1}^{d_K}u(A_i)\psi_i(X),~~~K\in \mathcal{T}_h^n.
\end{equation*}
For $K\in \mathcal{T}_h^i$, the local interpolation operator is defined as $I_{h,K}^i: C(K)\to S_h(K)$:
\begin{equation*}
  I_{h,K}^i u(X) = \sum_{i=1}^{d_K}u(A_i)\phi_i(X),~~~K\in \mathcal{T}_h^i.
\end{equation*}
On each non-interface element, we have the standard approximation theory for the finite element interpolation:
\begin{equation}\label{eq: interpolation error}
\|u-I^n_{h,K}u\|_{L^2(K)} + h_K|u-I^n_{h,K}u|_{H^1(K)} \leq Ch_K^{2}|u|_{H^2(K)},~~~~~\forall K\in \mathcal{T}_h^n.
\end{equation}
On each interface element, the approximation property of the IFE interpolation proved in \cite{XHe_TLin_YLin_Bilinear_Approximation, ZLi_TLin_XWu_Linear_IFE} provides similar error bounds as follows:
\begin{equation}\label{eq: IFE interpolation error}
\|u-I_{h,K}^i u\|_{L^2(K)}+h_K|u-I_{h,K}^iu|_{H^1(K)}\leq
Ch_K^2\|u\|_{\tilde H^2(K)},~~~~~\forall K\in \mathcal{T}_h^i,
\end{equation}
where the constant $C$ is independent of interface location. For $u\in \tilde H^2(\Omega)$, let $I_h: \tilde H^2(\Omega) \rightarrow S_{h}(\mathcal{T}_h)$ be the interpolation defined by
\begin{equation}\label{eq: interpolation}
  (I_hu)|_{K}=\left\{\begin{array}{l}
I^n_{h,K}u,\ \ K\in \mathcal{T}_h^n,\vspace{1mm}\\
I^i_{h,K}u,\ \ K\in \mathcal{T}_h^i.
\end{array}
\right.
\end{equation}
Multiplying $h_K^{-1}$ on both sides of \eqref{eq: interpolation error} and \eqref{eq: IFE interpolation error}, then summing up for all non-interface and interface elements, we can obtain an interpolation error bound on the domain $\Omega$ as stated in the next lemma.
\begin{lemma}
For $u\in \tilde H^2(\Omega)$, satisfying the interface jump conditions \eqref{eq: jump1} and \eqref{eq: jump2}, there exists a constant $C$ such that
\begin{equation}\label{eq: interpolation global}
\Big(\sum\limits_{K\in\mathcal{T}_h}h_K^{-2}\|u-I_hu\|_{L^2(K)}^2\Big)^{1/2}
+\Big(\sum\limits_{K\in\mathcal{T}_h}|u-I_hu|_{H^1(K)}^2\Big)^{1/2}
\leq Ch\|u\|_{\tilde H^2(\Omega)},
\end{equation}
where $h=\max\limits_{K\in \mathcal{T}_h} h_K$.
\end{lemma}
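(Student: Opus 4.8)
The plan is to derive the global bound \eqref{eq: interpolation global} by simply rescaling and summing the two local estimates \eqref{eq: interpolation error} and \eqref{eq: IFE interpolation error}; no analytic ingredient beyond those already cited is required, so the argument is essentially bookkeeping over the element partition.

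First I would put the two local estimates in a common form. For a non-interface element $K\in\mathcal{T}_h^n$ one has $K\subset\Omega^-$ or $K\subset\Omega^+$, hence $|u|_{H^2(K)}\leq\|u\|_{H^2(K)}=\|u\|_{\tilde H^2(K)}$, so that \eqref{eq: interpolation error} and \eqref{eq: IFE interpolation error} together give, recalling the definition \eqref{eq: interpolation} of $I_h$,
\[
\|u-I_hu\|_{L^2(K)}+h_K|u-I_hu|_{H^1(K)}\leq Ch_K^2\|u\|_{\tilde H^2(K)},\qquad\forall K\in\mathcal{T}_h.
\]
Since the two terms on the left are nonnegative, each is separately dominated by the right-hand side; multiplying the $L^2$ estimate by $h_K^{-1}$ then yields
\[
h_K^{-1}\|u-I_hu\|_{L^2(K)}\leq Ch_K\|u\|_{\tilde H^2(K)},\qquad |u-I_hu|_{H^1(K)}\leq Ch_K\|u\|_{\tilde H^2(K)}.
\]

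Next I would square both inequalities, bound $h_K\leq h=\max_{K}h_K$, and sum over $K\in\mathcal{T}_h$. The one step that deserves attention is that the local norms must recombine into the global one: each non-interface element lies inside a single subdomain, while every interface element $K$ is cut by $\overline{DE}$ into $K^\pm\subset\Omega^\pm$, so $\{K\cap\Omega^\pm\}_{K}$ is, up to a set of measure zero, a partition of $\Omega^\pm$. Additivity of the $L^2$ integrals defining the Sobolev norms then gives $\sum_{K\in\mathcal{T}_h}\|u\|_{\tilde H^2(K)}^2=\|u\|_{H^2(\Omega^-)}^2+\|u\|_{H^2(\Omega^+)}^2=\|u\|_{\tilde H^2(\Omega)}^2$. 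Hence each of the two sums appearing in \eqref{eq: interpolation global} is bounded by $C^2h^2\|u\|_{\tilde H^2(\Omega)}^2$.

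Finally I would take square roots and add the two resulting bounds, absorbing the factor of two into the generic constant $C$, which produces \eqref{eq: interpolation global}. I do not anticipate any genuine obstacle: the jump conditions \eqref{eq: jump1} and \eqref{eq: jump2} and the continuity of $u$ (needed only so that $I_hu$ is well defined through nodal values and so that the cited estimate \eqref{eq: IFE interpolation error} applies) are already encoded in the hypotheses and in the local estimates being invoked. The only points requiring care are the clean recombination of the piecewise $\tilde H^2$ norms into $\|u\|_{\tilde H^2(\Omega)}$ and the uniform replacement of $h_K$ by $h$, both of which are routine.
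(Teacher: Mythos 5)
Your proposal is correct and follows essentially the same route as the paper, which obtains the lemma precisely by multiplying the local estimates \eqref{eq: interpolation error} and \eqref{eq: IFE interpolation error} by $h_K^{-1}$ and summing over all elements. You merely make explicit the bookkeeping the paper leaves implicit, namely bounding $h_K\leq h$ and recombining the piecewise norms via $\sum_{K\in\mathcal{T}_h}\|u\|_{\tilde H^2(K)}^2=\|u\|_{\tilde H^2(\Omega)}^2$, both of which are handled correctly.
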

The following lemma provides the approximation property of $I_hu$ in the energy norm $\|\cdot\|_h$.
\begin{lemma}
Assume $\alpha\leq 1$ in the energy norm \eqref{eq: energy norm}. For every $u\in \tilde H^2(\Omega)$, satisfying the interface jump conditions \eqref{eq: jump1} and \eqref{eq: jump2}, there exists a constant $C$ such that
\begin{equation}\label{eq: interpolation h norm}
\|u-I_hu\|_h\leq Ch\|u\|_{\tilde H^2(\Omega)}.
\end{equation}
\end{lemma}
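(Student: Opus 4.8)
The plan is to square the energy norm \eqref{eq: energy norm} evaluated at $v = u - I_h u$ and estimate its two constituent pieces separately: the element gradient term $\sum_{K\in\mathcal{T}_h} \int_K \beta\,\nabla v \cdot \nabla v \, dX$ and the interior-edge penalty term $\sum_{B \in \mathring{\mathcal{E}}_h} \frac{\sigma_B^0}{|B|^\alpha}\int_B \jump{v}\jump{v}\, ds$. The gradient term is the easy half: since $\beta$ is bounded, it is dominated by a constant times $\sum_K |u - I_h u|^2_{H^1(K)}$, which by the global interpolation estimate \eqref{eq: interpolation global} is at most $C h^2 \|u\|^2_{\tilde H^2(\Omega)}$. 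So the substance of the proof lies in the penalty term.

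For the penalty term, the key observation is that $u \in \tilde H^2(\Omega)\subset H^1(\Omega)$, so $u$ has matching traces from either side of every interior edge, i.e. $\jump{u}_B = 0$. Consequently $\jump{u - I_h u}_B$ is just the difference of the one-sided traces of the interpolation error $u - I_h u$ taken from the two elements $K_{B,1}, K_{B,2}$ sharing $B$. I would then apply the triangle inequality, $\|\jump{u-I_hu}\|_{L^2(B)} \le \sum_{j=1,2}\|(u-I_hu)|_{K_{B,j}}\|_{L^2(B)}$, which lets me control the jump \emph{without} needing to know whether the discontinuous IFE interpolant matches across element edges. Each one-sided trace is then bounded by the standard trace inequality \eqref{eq: trace 1}. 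It is essential to use \eqref{eq: trace 1}, which requires only $H^1$-regularity, and \emph{not} \eqref{eq: trace 2}: on an interface element $(u-I_hu)|_K$ lies in $H^1(K)$ (since $u|_K \in H^1(K)$ and $S_h(K)\subset H^1(K)$) but not in $H^2(K)$ in general.

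Applying \eqref{eq: trace 1} to each adjacent element $K$ gives $\|(u-I_hu)|_K\|^2_{L^2(B)} \le C|B|\,|K|^{-1}\big(\|u-I_hu\|^2_{L^2(K)} + h_K^2|u-I_hu|^2_{H^1(K)}\big)$. Multiplying by $\sigma_B^0/|B|^\alpha$ produces the factor $|B|^{1-\alpha}|K|^{-1}$, and here the hypothesis $\alpha\le 1$ is exactly what is needed: with $|B| < 1$ it yields $|B|^{1-\alpha}\le 1$, while the Cartesian shape regularity $|B|\sim h_K$ and $|K|\sim h_K^2$ turns $|K|^{-1}$ into $Ch_K^{-2}$. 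The per-edge contribution is thus bounded by $C\sigma_B^0\big(h_K^{-2}\|u-I_hu\|^2_{L^2(K)} + |u-I_hu|^2_{H^1(K)}\big)$. Summing over all interior edges, and noting each element borders only a bounded number of edges, the penalty term is dominated by $C\sum_K\big(h_K^{-2}\|u-I_hu\|^2_{L^2(K)} + |u-I_hu|^2_{H^1(K)}\big)$, which is again at most $Ch^2\|u\|^2_{\tilde H^2(\Omega)}$ by \eqref{eq: interpolation global}. Adding the two pieces and taking square roots gives \eqref{eq: interpolation h norm}.

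I expect the only real obstacle to be the penalty term, and specifically the possible mismatch of the IFE interpolant across edges. The crux is to organize the estimate so that it relies solely on the $H^1$-trace inequality \eqref{eq: trace 1} (applicable to IFE functions) together with the continuity of $u$ itself, after which the condition $\alpha\le 1$ guarantees that the penalty scaling $|B|^{-\alpha}$ is weak enough not to degrade the optimal $O(h)$ rate.
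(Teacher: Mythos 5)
Your proposal is correct and takes essentially the same route as the paper's proof: both split the squared energy norm into the gradient part (handled by \eqref{eq: interpolation global} and the boundedness of $\beta$) and the penalty part, which is bounded by applying the triangle inequality to the jump, the $H^1$-trace inequality \eqref{eq: trace 1} on each of the two adjacent elements (valid there since $u-I_hu$ is only piecewise $H^1$-regular on interface elements), shape regularity of the Cartesian mesh, and the hypothesis $\alpha\le 1$ together with $h<1$. The only cosmetic difference is bookkeeping: you keep the per-edge bound in the form $h_K^{-2}\|u-I_hu\|^2_{L^2(K)}+|u-I_hu|^2_{H^1(K)}$ and invoke the global estimate \eqref{eq: interpolation global} once at the end, whereas the paper substitutes the elementwise estimates \eqref{eq: interpolation error} and \eqref{eq: IFE interpolation error} edge by edge, obtaining $Ch_K^{3-\alpha}$ factors before summing.
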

\begin{proof}
By the definition of $\|\cdot\|_h$, we have
\begin{equation}\label{eq: energy norm tmp1}
  \|u-I_hu\|_h^2 = \sum\limits_{K\in\mathcal{T}_h}\int_K\beta|\nabla
(u-I_hu)|^2dX+\sum\limits_{B\in \mathcal{\mathring
E}_h}\frac{\sigma_B^0}{|B|^{\alpha}}\|\jump{u-I_hu}\|_{L^{2}(B)}^2.
\end{equation}
For the first term on the right hand side, we use the estimate \eqref{eq: interpolation global} to have
\begin{equation}\label{eq: interpolation error temp1}
\sum\limits_{K\in\mathcal{T}_h}\int_K\beta|\nabla
(u-I_hu)|^2dX\leq \beta_{max}\sum\limits_{K\in\mathcal{T}_h}\|\nabla
(u-I_hu)\|_{L^2(K)}^2\leq \beta_{max}h^2\|u\|^2_{\tilde H^2(\Omega)}
 \end{equation}
where $\beta_{max} = \max\{\beta^-,\beta^+\}$. Now we bound the second term in \eqref{eq: energy norm tmp1}. Using the standard trace equality \eqref{eq: trace 1} and the approximation properties \eqref{eq: interpolation error} or \eqref{eq: IFE interpolation error}, we have
 \begin{eqnarray*}
\frac{\sigma_B^0}{|B|^{\alpha}}\|\jump{u-I_hu}\|^2_{L^2(B)}&\leq &
\frac{\sigma_B^0}{|B|^{\alpha}}(\|(u-I_hu)|_{K_{B,1}}\|^2_{L^2(B)}+\|(u-I_hu)|_{K_{B,2}}\|^2_{L^2(B)})\\
&\leq&
Ch_{K_{B,1}}^{-1-\alpha}\left(\|u-I_hu\|_{L^2(K_{B,1})}^2+h_{K_{B,1}}^2\|\nabla(u-I_hu)\|^2_{L^2(K_{B,1})}\right)\\
&&+Ch_{K_{B,2}}^{-1-\alpha}\left(\|u-I_hu\|^2_{L^2(K_{B,2})}+h_{K_{B,2}}^2\|\nabla(u-I_hu)\|^2_{L^2(K_{B,2})}\right)\\
&\leq& Ch_{K_{B,1}}^{3-\alpha}\|u\|^2_{V(K_{B,1})} + Ch_{K_{B,2}}^{3-\alpha}\|u\|^2_{V(K_{B,2})}\\
&\leq& Ch^{3-\alpha}\left(\|u\|^2_{V(K_{B,1})}+\|u\|^2_{V(K_{B,2})}\right),
 \end{eqnarray*}
where $V(K)=H^2(K)$ for $K\in\mathcal{T}_h^n$ and $V(K)=\tilde H^2(K)$ for $K\in\mathcal{T}_h^i$, and $h=\max\limits_{K\in \mathcal{T}_h} h_K$. Also, the second inequality is due to the shape-regular property of Cartesian triangular or rectangular meshes $h_{K_B,i}\leq C|B|$, $i=1,2$. Thus, for $\alpha\leq 1$, we get
\begin{equation}\label{eq: interpolation error temp2}
 \sum\limits_{B\in \mathcal{\mathring E}_h}\frac{\sigma_B^0}{|B|^{\alpha}}\|\jump{u-I_hu}\|^2_{L^2(B)}\leq Ch^2\|u\|^2_{\tilde H^2(\Omega)}.
 \end{equation}
 Finally, combining \eqref{eq: interpolation error temp1} and \eqref{eq: interpolation error temp2}, we get \eqref{eq: interpolation h norm}.
\end{proof}

The coercivity of the bilinear form $a_\epsilon(\cdot,\cdot)$ is analyzed in the following lemma.
\begin{lemma}
There exists a constant $\kappa>0$ such that
\begin{equation}\label{eq: coercivity}
 a_\epsilon(v_h,v_h)\geq\kappa\|v_h\|_h^2,\ \ \forall v_h\in
 \mathring{S}_{h}(\mathcal{T}_h)
\end{equation}
holds for $\epsilon=1$ unconditionally and holds for $\epsilon=0 \text{ or }-1$ under the conditions that the penalty parameter $\sigma_B^0$ is large enough and $\alpha\geq 1$.
\end{lemma}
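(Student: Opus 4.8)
The plan is to insert $w=v=v_h$ into the bilinear form \eqref{eq: DG bilinear form} and compare the result directly against the energy norm. Combining the two flux sums, the diagonal reads
\begin{equation*}
a_\epsilon(v_h,v_h)=\|v_h\|_h^2+(\epsilon-1)\sum_{B\in\mathcal{\mathring E}_h}\int_B\aver{\beta\nabla v_h\cdot\mathbf{n}_B}\jump{v_h}\,ds,
\end{equation*}
since the volume term and the penalty term are precisely the two constituents of $\|v_h\|_h^2$ in \eqref{eq: energy norm}. For $\epsilon=1$ the flux sum vanishes identically, so $a_1(v_h,v_h)=\|v_h\|_h^2$ and \eqref{eq: coercivity} holds with $\kappa=1$ and no restriction on $\sigma_B^0$ or $\alpha$. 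All remaining work lies in the cases $\epsilon=0$ and $\epsilon=-1$, where I must show the flux sum can be absorbed into the two nonnegative parts of $\|v_h\|_h^2$.

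The key is a uniform bound on the normal flux over each edge. On an interface edge I would apply the IFE trace inequality \eqref{eq: trace IFE} of the previous lemma to each of the two elements sharing $B$, while on a non-interface edge I would combine the standard trace inequality \eqref{eq: trace 2} with an inverse estimate for the (bi)linear polynomial $v_h$, whose Hessian is controlled by $h_K^{-1}\|\nabla v_h\|_{L^2(K)}$; both routes yield
\begin{equation*}
\|\beta\nabla v_h\cdot\mathbf{n}_B\|_{L^2(B)}\le Ch_K^{-1/2}\|\sqrt\beta\,\nabla v_h\|_{L^2(K)},\qquad K=K_{B,1},\,K_{B,2}.
\end{equation*}
Hence $\|\aver{\beta\nabla v_h\cdot\mathbf{n}_B}\|_{L^2(B)}$ is bounded by the corresponding sum over the two neighbours. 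Applying the Cauchy--Schwarz inequality on $B$ followed by a weighted Young's inequality, with the weight chosen as a small multiple of $\sigma_B^0/|B|^\alpha$, I would split each edge contribution into a piece proportional to $\tfrac{\sigma_B^0}{|B|^\alpha}\|\jump{v_h}\|_{L^2(B)}^2$ and a piece proportional to $\tfrac{|B|^\alpha}{\sigma_B^0}h_K^{-1}\|\sqrt\beta\,\nabla v_h\|_{L^2(K)}^2$.

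It then remains to absorb these two pieces. The jump piece is controlled directly by the penalty term once the Young parameter is taken small. For the gradient piece I would use the shape regularity bound $|B|\le Ch_K$, so that $|B|^\alpha h_K^{-1}\le C h_K^{\alpha-1}\le C$ precisely when $\alpha\ge 1$ (recall $h_K<1$); summing over edges and noting that each element is shared by at most $d_K\le 4$ interior edges turns this piece into a fixed multiple of $\sum_K\|\sqrt\beta\,\nabla v_h\|_{L^2(K)}^2$ divided by $\sigma^0:=\min_B\sigma_B^0$. Choosing $\sigma^0$ large enough makes that multiple strictly less than one, leaving
\begin{equation*}
a_\epsilon(v_h,v_h)\ge\Big(1-\tfrac{C_1}{\sigma^0}\Big)\sum_K\|\sqrt\beta\,\nabla v_h\|_{L^2(K)}^2+\Big(1-\tfrac12\Big)\sum_B\frac{\sigma_B^0}{|B|^\alpha}\|\jump{v_h}\|_{L^2(B)}^2,
\end{equation*}
from which \eqref{eq: coercivity} follows with $\kappa$ the minimum of the two positive coefficients; the case $\epsilon=-1$ is identical up to replacing $|\epsilon-1|=1$ by $2$, which merely enlarges the threshold required of $\sigma^0$.

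I expect the main obstacle to be the flux bound on interface edges: because $v_h\notin H^2(K)$ on an interface element, the ordinary trace inequality \eqref{eq: trace 2} is unavailable, and the whole argument hinges on the IFE trace inequality \eqref{eq: trace IFE}, whose constant is independent of the interface location though dependent on the contrast in $\beta$. The two structural hypotheses enter exactly here: $\alpha\ge 1$ is forced by the need to bound $|B|^\alpha h_K^{-1}$ uniformly, and the requirement that $\sigma_B^0$ be large enough is what finally lets the gradient remnant of the consistency term be swallowed by the volume energy.
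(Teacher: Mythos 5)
Your proposal is correct and follows essentially the same route as the paper's proof: the same diagonal identity isolating the $(\epsilon-1)$ flux sum, the standard trace inequality \eqref{eq: trace 2} plus an inverse estimate on non-interface elements and the IFE trace inequality \eqref{eq: trace IFE} on interface elements, then Cauchy--Schwarz and Young's inequality with $\alpha\geq 1$ and shape regularity converting $h_K^{-1/2}$ into $|B|^{-\alpha/2}$. The only difference is bookkeeping: you weight Young's inequality by $\sigma_B^0/|B|^{\alpha}$ so the largeness threshold lands on the gradient remnant, whereas the paper uses an unweighted parameter $\delta$ and imposes $\sigma_B^0>C/2$ (for $\epsilon=0$) or $\sigma_B^0>2C$ (for $\epsilon=-1$) to absorb the jump remnant --- an equivalent arrangement.
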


\begin{proof}
From the definition of $a_\epsilon(\cdot,\cdot)$, we have
\begin{equation}\label{eq: coercivity temp1}
  a_\epsilon(v_h,v_h) =
  \sum\limits_{K\in\mathcal{T}_h}\int_K\beta|\nabla v_h|^2 dX
+(\epsilon-1)\sum\limits_{B\in \mathcal{\mathring E}_h}\int_{B}\aver{\beta\nabla v_h\cdot
\mathbf{n}_B}\jump{v_h}ds+\sum\limits_{B\in \mathcal{\mathring E}_h}\frac{\sigma^0_B}{|B|^{\alpha}}\int_{B}\jump{v_h}^2ds.
\end{equation}
We first note that, when $\epsilon=1$, the coercivity follows directly from \eqref{eq: coercivity temp1} and the definition of $\|\cdot\|_h$. If $\epsilon=0 \text{ or }-1$, we need to bound the second term on the right hand side of \eqref{eq: coercivity temp1}. For each $B\in \mathcal{\mathring E}_h$, recall that $K_{B,i}\in \mathcal{T}_h$, $i=1,2$  are two elements sharing $B$ as their common edge.
If $K_{B,i}$, $i=1$ or 2 is a non-interface element, by the
trace inequality \eqref{eq: trace 2} and inverse inequalities, we have
\begin{eqnarray}
  \|(\beta\nabla v_h\cdot\mathbf{n}_B)|_{K_{B,i}}\|_{L^2(B)}
  &\leq& \beta_{\max}\|(\nabla v_{h})|_{K_{B,i}}\|_{L^2(B)} \nonumber\\
  &\leq&  C\beta_{\max}h_{K_{B,i}}^{-\frac{1}{2}}\|\nabla v_{h}\|_{L^2(K_{B,i})}\nonumber\\
  &\leq& C\frac{\beta_{\max}}{\sqrt{\beta_{\min}}}h_{K_{B,i}}^{-\frac{1}{2}}\|\sqrt{\beta}\nabla v_h\|_{L^2(K_{B,i})}, \label{eq: non-interface edge}
\end{eqnarray}
where $\beta_{\min} = \min\{\beta^-,\beta^+\}$, and $\beta_{\max} = \max\{\beta^-,\beta^+\}$.
Then, using the assumption that $\alpha \geq 1$ and by either the estimate \eqref{eq: non-interface edge} or IFE trace inequality \eqref{eq: trace IFE} depending on whether
the element is a non-interface element or an interface element, we have
\begin{eqnarray*}
\int_B\aver{\beta\nabla v_h\cdot \mathbf{n}_B}\jump{v_h}ds
&\leq& \|\aver{\beta\nabla v_h\cdot \mathbf{n}_B}\|_{L^2(B)}\|\jump{v_h}\|_{L^2(B)}\\
&\leq& \frac{1}{2}\left(\|(\beta\nabla v_h\cdot \mathbf{n}_B)|_{K_{B,1}}\|_{L^2(B)}
+\|(\beta\nabla v_h\cdot\mathbf{n}_B)|_{K_{B,2}}\|_{L^2(B)}\right)\|\jump{v_h}\|_{L^2(B)}\\
&\leq& \frac{C}{2}\left(h_{K_{B,1}}^{-\frac{1}{2}}\|\sqrt{\beta}\nabla v_h\|_{L^2(K_{B,1})}
+h_{K_{B,2}}^{-\frac{1}{2}}\|\sqrt{\beta}\nabla v_h\|_{L^2(K_{B,2})}\right)\|\jump{v_h}\|_{L^2(B)}\\
&\leq& C\left(\|\sqrt{\beta}\nabla v_h\|_{L^2(K_{B,1})}^2
+\|\sqrt{\beta}\nabla v_h\|_{L^2(K_{B,2})}^2\right)^{\frac{1}{2}}\frac{1}{|B|^{\alpha/2}}\|\jump{v_h}\|_{L^2(B)}.
\end{eqnarray*}
Summing over all interior edges and using the Young's inequality,  we have
\begin{eqnarray}
&&\sum\limits_{B\in \mathcal{\mathring E}_h}\int_B\aver{\beta\nabla v_h\cdot
\mathbf{n}_B}\jump{v_h}ds\nonumber\\
&\leq&
C\sum\limits_{B\in \mathcal{\mathring E}_h}
\left(\|\sqrt{\beta}\nabla v_h\|_{L^2(K_{B,1})}^2+\|\sqrt{\beta}\nabla v_h\|_{L^2(K_{B,2})}^2\right)^{1/2}
\frac{1}{|B|^{\alpha/2}}\|\jump{v_h}\|_{L^2(B)}\nonumber\\
&\leq&
C\left(\sum\limits_{B\in \mathcal{\mathring E}_h}\frac{1}{|B|^{\alpha}}\|\jump{v_h}\|^2_{L^2(B)}\right)^{1/2}\left(\sum\limits_{B\in
\mathcal{\mathring E}_h}\left(\|\sqrt{\beta}\nabla
v_h\|_{L^2(K_{B,1})}^2+\|\sqrt{\beta}\nabla
v_h\|_{L^2(K_{B,2})}^2\right)\right)^{1/2}\nonumber\\
&\leq&
\frac{\delta}{2}\sum\limits_{K\in
\mathcal{T}_h}\|\sqrt{\beta}\nabla v_h\|_{L^2(K)}^2+\frac{C}{2\delta}\sum\limits_{B\in \mathcal{\mathring E}_h}\frac{1}{|B|^{\alpha}}\|\jump{v_h}\|^2_{L^2(B)}.\label{eq: coercivity temp2}
 \end{eqnarray}
Then, for $\epsilon=0$, we can choose
\begin{equation*}
\delta=1\ \  \text{and}\ \
 \sigma_B^0> \frac{C}{2},
\end{equation*}
and for $\epsilon=-1$, we can choose
\begin{equation*}
 \delta=\frac{1}{2}\ \  \text{and}\ \
 \sigma_B^0>2C.
\end{equation*}
 Substituting these parameters in \eqref{eq: coercivity temp2} and then putting it in \eqref{eq: coercivity temp1}, we obtain \eqref{eq: coercivity}.
\end{proof}

We also need an error bound for the IFE interpolation $I_hu$ on interface edges which has been proved in \cite{TLin_YLin_XZhang_PPIFE_Elliptic}. We present the result in the following lemma.
\begin{lemma}
For every $u\in \tilde H^3(\Omega)$, satisfying the interface jump conditions \eqref{eq: jump1} and \eqref{eq: jump2}, there exists a constant $C$ independent of interface location such that
\begin{equation}\label{eq: interpolation error edge}
\|\beta(\nabla(u-I_hu))|_K\cdot \mathbf{n}_B\|^2_{L^2(B)}\leq
C\left(h_K^2\|u\|_{\tilde H^3(\Omega)}^2+h_K\|u\|_{\tilde H^2(K)}^2\right),
\end{equation}
where $K$ is an interface element and $B$ is one of its interface
edge.
\end{lemma}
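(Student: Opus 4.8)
The plan is to bound the normal flux of the interpolation error $w:=u-I_hu$ on the interface edge $B$ by first localizing to the two sides of the interface. Since $B$ is an interface edge, the interface meets it at a single point (by (H2)), splitting it into $B^{s}:=B\cap\Omega^{s}$, $s=+,-$, on each of which $\beta$ equals the constant $\beta^{s}$ and the trace of $w$ is the one inherited from the adjacent sub-element $K^{s}:=K\cap\Omega^{s}$. Thus
\[
\|\beta\,(\nabla w)|_K\cdot\mathbf{n}_B\|_{L^2(B)}^2
=\sum_{s=+,-}(\beta^{s})^2\,\|\nabla(w|_{K^{s}})\cdot\mathbf{n}_B\|_{L^2(B^{s})}^2,
\]
so it suffices to estimate each summand. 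On $K^{s}$ the error $w|_{K^{s}}=u|_{K^{s}}-I_hu|_{K^{s}}$ is a genuine $H^2$ function, since $u|_{K^{s}}\in H^3(K^{s})$ and $I_hu|_{K^{s}}$ is a (bi)linear polynomial, so in principle a trace inequality on $K^{s}$ followed by the IFE interpolation estimate \eqref{eq: IFE interpolation error} would do.

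The difficulty, and the reason the right-hand side of \eqref{eq: interpolation error edge} has its particular two-term form, is that $K^{s}$ is neither a triangle nor a rectangle and can degenerate into an arbitrarily thin sliver as the chord $\overline{DE}$ approaches a vertex of $K$. A naive trace or extension estimate on $K^{s}$ therefore carries a constant that blows up with the sliver, and a bound with a constant \emph{independent of the interface location} cannot come from \eqref{eq: trace 2} alone. The whole point of the estimate is that interface independence is recovered at the price of the extra, globally-measured term $h_K^2\|u\|_{\tilde H^3(\Omega)}^2$, where the additional regularity is spent to control the variation of $\nabla u$ across the sliver.

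Concretely I would follow the bounded-extension / multipoint-Taylor technique of \cite{TLin_YLin_XZhang_PPIFE_Elliptic}: extend $u|_{K^{s}}$ to an $H^3$ function $\widehat{u}^{s}$ on all of $K$ with an operator bound that is uniform in the interface position — this uniformity is exactly what the Cartesian structure together with (H1)--(H2) (so that $\overline{DE}$ is a single chord cutting two distinct edges) supplies, and it is the technical heart of the argument — and set $\widehat{w}^{s}:=\widehat{u}^{s}-I_hu|_{K}\in H^2(K)$, which agrees with $w|_{K^{s}}$ along $B^{s}$. Applying the standard trace inequality \eqref{eq: trace 2} on the full element $K$ together with $|B|^{1/2}|K|^{-1/2}\le Ch_K^{-1/2}$ gives
\[
\|\nabla(w|_{K^{s}})\cdot\mathbf{n}_B\|_{L^2(B^{s})}
\le\|\nabla\widehat{w}^{s}\|_{L^2(B)}
\le Ch_K^{-1/2}\bigl(\|\nabla\widehat{w}^{s}\|_{L^2(K)}+h_K\|\nabla^2\widehat{w}^{s}\|_{L^2(K)}\bigr).
\]
The Hessian term is bounded by $C\|\widehat{u}^{s}\|_{H^2(K)}\le C\|u\|_{\tilde H^2(K)}$ via the uniform extension bound (since $I_hu$ is piecewise (bi)linear); after the factor $h_K^{1/2}$ and squaring this yields the local term $h_K\|u\|_{\tilde H^2(K)}^2$.

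The gradient term $\|\nabla\widehat{w}^{s}\|_{L^2(K)}$ is where the higher regularity is consumed. I would insert the standard nodal interpolant $q^{s}$ of $\widehat{u}^{s}$ on $K$: the part $\widehat{u}^{s}-q^{s}$ is handled by the ordinary interpolation bound \eqref{eq: interpolation error}, while the polynomial difference $q^{s}-I_hu|_{K}$ is controlled by comparing the two interpolants through the IFE approximation property \eqref{eq: IFE interpolation error} and the interface jump conditions \eqref{eq: jump1}--\eqref{eq: jump2}. It is this comparison — measuring how far the IFE polynomial pieces sit from the extended smooth function across the chord, and relying on an expansion that reaches into a neighborhood of $K$ — that produces the global $h_K^2\|u\|_{\tilde H^3(\Omega)}^2$ contribution after the $h_K^{-1/2}$ trace factor is squared. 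Summing the two sub-edge contributions over $s=+,-$ then gives \eqref{eq: interpolation error edge}. I expect the uniform-in-interface extension (equivalently, the interface-location-independent trace estimate on the sliver) to be the genuine obstacle; once it is available, the rest is bookkeeping with the scalings $|B|\sim|K|^{1/2}\sim h_K$ and the already-established interpolation bounds.
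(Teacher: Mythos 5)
The paper does not actually prove this lemma: it imports it verbatim from \cite{TLin_YLin_XZhang_PPIFE_Elliptic}, so your sketch has to be judged against that reference's argument, whose machinery (extension plus multipoint Taylor expansion) you correctly name. Judged as a proof, however, it has a genuine gap at exactly the step you yourself call the technical heart. You posit an $H^3$ extension of $u|_{K^s}$ from the sliver $K^s$ to all of $K$ with an operator bound ``uniform in the interface position,'' attributing the uniformity to the Cartesian structure and (H1)--(H2). No such local extension exists: extension constants degenerate as the domain degenerates (your own preamble explains why the sliver is the obstacle), and (H1)--(H2) control the chord geometry, not extension norms. What \cite{TLin_YLin_XZhang_PPIFE_Elliptic} actually uses is a global Sobolev (Stein) extension $u_E^s\in H^3(\Omega)$ of $u|_{\Omega^s}$ across the smooth curve $\Gamma$, with a constant depending only on $\Gamma$, restricted to $K$. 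This repair breaks your next step: with a global extension the claimed bound $\|\widehat u^s\|_{H^2(K)}\le C\|u\|_{\tilde H^2(K)}$ is unjustified, because on $K\setminus\Omega^s$ the extension is controlled only by global norms of $u$. The distinction is not cosmetic. If the Hessian contribution per interface edge were only $h_K\|u\|^2_{\tilde H^2(\Omega)}$, then summing over the $O(h^{-1})$ interface edges in \eqref{eq: DG error temp10}--\eqref{eq: DG error temp11} would give $O(h)$ instead of $O(h^2)$, and Theorem \ref{th:DG_convergence} would lose half an order. The substance of the lemma is precisely that every nonlocal (extension-borne) contribution can be traded into the $h_K^2\|u\|^2_{\tilde H^3(\Omega)}$ term --- for instance via $\nabla^2 u_E^s\in H^1(\Omega)\hookrightarrow L^4(\Omega)$ combined with $|K|\le Ch_K^2$, or via the explicit multipoint-Taylor remainders, where the extra derivative is actually spent --- and your sketch supplies no mechanism for this trade.

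A second, smaller flaw: as written, $\widehat w^s=\widehat u^s-I_hu|_K$ is not in $H^2(K)$, since $I_hu|_K$ is only piecewise (bi)linear with a gradient kink across the chord $\overline{DE}$, so the trace inequality \eqref{eq: trace 2} cannot be applied to it on $K$. You must instead subtract the single polynomial piece $p^s$ associated with $K^s$, extended polynomially to all of $K$ (this still agrees with $I_hu$ on $B^s$). But then the gradient term $\|\nabla(\widehat u^s-p^s)\|_{L^2(K)}$ is no longer an instance of \eqref{eq: IFE interpolation error}, which compares each IFE piece with $u$ only on its own side of the chord; bounding the wrong-side mismatch between $p^s$ and $u_E^s$ is again exactly where the jump conditions \eqref{eq: jump1}--\eqref{eq: jump2} and the multipoint Taylor expansions of \cite{ZLi_TLin_XWu_Linear_IFE, XHe_TLin_YLin_Bilinear_Approximation} are consumed, and again the output carries the global $\tilde H^3(\Omega)$ norm rather than a local one. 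In short, your plan identifies the right reference and correctly rationalizes the two-term structure of \eqref{eq: interpolation error edge}, but the two assertions it rests on --- a uniform extension from the sliver, and local $\tilde H^2(K)$ control of the extension --- are both untrue as stated, and repairing them constitutes the actual proof rather than bookkeeping.
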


The assumptions of $\alpha$ in Lemma 3.4 and Lemma 3.5 suggest that we should choose $\alpha = 1$ in our DG formulation \eqref{eq: DG method}. Now we are ready to prove the \textit{a priori} error estimate for DG-IFE method \eqref{eq: DG method}.
\begin{theorem} \label{th:DG_convergence}
Let $u\in \tilde H^3(\Omega)$ be the exact solution to the interface problem \eqref{eq: pde} to \eqref{eq: jump2} and $u_h\in S_{h}(\mathcal{T}_h)$ be the solution to \eqref{eq: DG method} with $\alpha=1$, $\epsilon = -1$, $0$, or $1$. Then there exists a constant $C$ such that
\begin{equation}\label{eq: DG error estimates}
  \|u-u_h\|_h\leq Ch\|u\|_{\tilde H^3(\Omega)}.
\end{equation}
\end{theorem}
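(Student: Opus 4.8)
The plan is to use the standard coercivity-plus-Galerkin-orthogonality energy argument for interior penalty DG schemes, splitting the error through the IFE interpolant. First I would write $u-u_h=(u-I_hu)+(I_hu-u_h)$ and set $\eta_h=I_hu-u_h\in\mathring{S}_h(\mathcal{T}_h)$. The triangle inequality then reduces the estimate to bounding $\|u-I_hu\|_h$, which is already $Ch\|u\|_{\tilde H^2(\Omega)}$ by the energy-norm interpolation bound \eqref{eq: interpolation h norm}, plus the discrete piece $\|\eta_h\|_h$. The key observation enabling orthogonality is that every IFE function is a polynomial on each sub-element $K^{\pm}$, so $\mathring{S}_h(\mathcal{T}_h)\subset\tilde H_0^2(\mathcal{T}_h)$; since both $u$ and $u_h$ satisfy the same bilinear identity against test functions in $\mathring{S}_h(\mathcal{T}_h)$, we obtain $a_\epsilon(u-u_h,v_h)=0$ for all $v_h\in\mathring{S}_h(\mathcal{T}_h)$.

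Next I would invoke the coercivity \eqref{eq: coercivity} (valid for $\alpha=1$ with $\sigma_B^0$ large enough when $\epsilon\in\{0,-1\}$) together with orthogonality to write, with $\xi=u-I_hu$,
\begin{equation*}
\kappa\|\eta_h\|_h^2\leq a_\epsilon(\eta_h,\eta_h)=a_\epsilon(I_hu-u,\eta_h)=-a_\epsilon(\xi,\eta_h).
\end{equation*}
It then remains to estimate the four terms of $a_\epsilon(\xi,\eta_h)$ from the definition \eqref{eq: DG bilinear form}. The volume term is controlled by Cauchy--Schwarz and the global interpolation bound \eqref{eq: interpolation global}, giving $Ch\|u\|_{\tilde H^2(\Omega)}\|\eta_h\|_h$; the penalty term is bounded directly by the penalty parts of $\|\xi\|_h$ and $\|\eta_h\|_h$, again yielding $Ch\|u\|_{\tilde H^2(\Omega)}\|\eta_h\|_h$ via \eqref{eq: interpolation h norm}. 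The symmetrization term carrying $\epsilon$ (the one with $\jump{\xi}$) is handled exactly as in the coercivity proof: the standard trace inequality \eqref{eq: trace 2} on non-interface elements and the IFE trace inequality \eqref{eq: trace IFE} on interface elements bound $\|\aver{\beta\nabla\eta_h\cdot\mathbf{n}_B}\|_{L^2(B)}$ by $Ch_K^{-1/2}\|\sqrt{\beta}\nabla\eta_h\|_{L^2(K)}$, and pairing this (using $\alpha=1$) with the penalty part of $\|\xi\|_h$ produces $Ch\|u\|_{\tilde H^2(\Omega)}\|\eta_h\|_h$.

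The hard part, and the reason the hypothesis is strengthened to $u\in\tilde H^3(\Omega)$, is the consistency flux term $\sum_B\int_B\aver{\beta\nabla\xi\cdot\mathbf{n}_B}\jump{\eta_h}\,ds$. I would split the sum into non-interface and interface edges. On non-interface edges I would combine \eqref{eq: trace 2} with the interpolation bounds \eqref{eq: interpolation error} to get $|B|\,\|\aver{\beta\nabla\xi\cdot\mathbf{n}_B}\|_{L^2(B)}^2\leq Ch_K^2|u|_{H^2(K)}^2$. On interface edges the standard trace inequality is unavailable for the IFE interpolation error, so I would instead invoke the interface-edge bound \eqref{eq: interpolation error edge}, which estimates $\|\beta(\nabla\xi)|_K\cdot\mathbf{n}_B\|_{L^2(B)}^2$ by $C(h_K^2\|u\|_{\tilde H^3(\Omega)}^2+h_K\|u\|_{\tilde H^2(K)}^2)$. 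The delicate point is the summation: because $\Gamma$ is a curve of length $O(1)$, there are only $O(h^{-1})$ interface edges, so the leading contribution is of order $h^{-1}\cdot h\cdot h^2\|u\|_{\tilde H^3(\Omega)}^2=h^2\|u\|_{\tilde H^3(\Omega)}^2$, preserving the optimal order. After a weighted Cauchy--Schwarz split with weights $|B|^{\alpha}$ and $|B|^{-\alpha}$ at $\alpha=1$, this flux term is bounded by $Ch\|u\|_{\tilde H^3(\Omega)}\|\eta_h\|_h$. Collecting the four estimates gives $\kappa\|\eta_h\|_h^2\leq Ch\|u\|_{\tilde H^3(\Omega)}\|\eta_h\|_h$, hence $\|\eta_h\|_h\leq Ch\|u\|_{\tilde H^3(\Omega)}$, and the triangle inequality yields \eqref{eq: DG error estimates}.
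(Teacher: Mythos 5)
Your proposal follows essentially the same route as the paper's proof: coercivity plus Galerkin orthogonality applied to the discrete error $u_h - I_hu$, the interpolation bounds \eqref{eq: interpolation global} and \eqref{eq: interpolation h norm}, the discrete trace inequalities \eqref{eq: trace 2}/\eqref{eq: trace IFE} for the terms carrying the gradient of the discrete error, and Lemma 3.6 combined with the counting bound $\sum_{K\in\mathcal{T}_h^i}h_K\leq C$ for the consistency flux term, which is exactly where the paper also invokes $u\in\tilde H^3(\Omega)$. The only differences are cosmetic: you fix $w_h=I_hu$ at the outset where the paper first derives the bound \eqref{eq: DG error temp9} for a generic $w_h$, and you classify the flux sum by interface versus non-interface \emph{edges} where the paper classifies by \emph{elements}, applying \eqref{eq: interpolation error edge} to every edge of an interface element.
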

\begin{proof}
Subtracting the weak form \eqref{eq: weak form} from the DG-IFE scheme \eqref{eq: DG method}, we get
\begin{equation}\label{eq: orthogonality}
  a_\epsilon(u-u_h,v_h)=0,\ \ \forall v_h\in \mathring{S}_{h}(\mathcal{T}_h).
\end{equation}
For every $w_h\in \mathring{S}_{h}(\mathcal{T}_h)$, using \eqref{eq: orthogonality} and the coercivity \eqref{eq: coercivity}, we have
\begin{eqnarray}\nonumber
 \kappa\|u_h-w_h\|_h^2
 &\leq& a_\epsilon(u_h-w_h,u_h-w_h)=a_\epsilon(u-w_h,u_h-w_h)\\
 \nonumber
 &\leq& \left|\sum\limits_{K\in\mathcal{T}_h}\int_K\beta\nabla (u-w_h)\cdot \nabla(u_h-w_h)dX\right|
 +\left|\sum\limits_{B\in \mathcal{\mathring E}_h}\int_{B}\aver{\beta\nabla (u-w_h)\cdot \mathbf{n}_B}\jump{u_h-w_h}ds\right|\\ \nonumber
 &&+\left|\sum\limits_{B\in \mathcal{\mathring E}_h}\int_{B}\aver{\beta\nabla (u_h-w_h)\cdot \mathbf{n}_B}\jump{u-w_h}ds\right|
 +\left|\sum\limits_{B\in \mathcal{\mathring E}_h}\frac{\sigma^0_B}{ |B|^{\alpha}}\int_{B}\jump{u-w_h}\jump{u_h-w_h}ds\right|\\
 &\triangleq& T_1+T_2+T_3+T_4. \label{eq: DG error temp1}
\end{eqnarray}
We proceed to bound the terms $T_i, i = 1, 2, 3, 4$ in \eqref{eq: DG error temp1}. By the Cauchy-Schwarz inequality and Young's inequality with parameter $\delta > 0$, we can easily bound $T_1$ and $T_2$:
\begin{eqnarray}\nonumber
T_1&\leq & \left(\sum\limits_{K\in\mathcal{T}_h}\|\sqrt{\beta}\nabla
(u-w_h)\|^2_{L^2(K)}\right)^{1/2}\left(\sum\limits_{K\in\mathcal{T}_h}\|\sqrt{\beta}\nabla
(u_h-w_h)\|^2_{L^2(K)}\right)^{1/2}\\ \nonumber &\leq &\frac{1}{4\delta}\beta_{\max}\|\nabla
(u-w_h)\|_{L^2(\Omega)}^2+\delta\sum\limits_{K\in\mathcal{T}_h}\|\sqrt{\beta}\nabla
(u_h-w_h)\|^2_{L^2(K)}\\
&\leq& C(\delta)\|\nabla (u-w_h)\|_{L^2(\Omega)}^2+\delta\|u_h-w_h\|^2_h, \label{eq: DG error temp2}
 \end{eqnarray}
 and
\begin{eqnarray}\nonumber
T_2&\leq & C(\delta)\sum\limits_{B\in\mathcal{\mathring
E}_h}\frac{|B|^{\alpha}}{\sigma_B^0}\|\aver{\beta\nabla
(u-w_h)\cdot\mathbf{n}_B}\|^2_{L^2(B)}+\delta\sum\limits_{B\in\mathcal{\mathring E}_h}\frac{\sigma_B^0}{|B|^{\alpha}}\|\jump{u_h-w_h}\|^2_{L^2(B)}
\\
&\leq &C(\delta)\sum\limits_{B\in\mathcal{\mathring
E}_h}\frac{|B|^{\alpha}}{\sigma_B^0}\|\aver{\beta\nabla
(u-w_h)\cdot\mathbf{n}_B}\|^2_{L^2(B)}+\delta\|u_h-w_h\|^2_h, \label{eq: DG error temp3}
 \end{eqnarray}
where $C(\delta)$ emphasizes that this is a constant depending on $\delta$. For $T_3$, by the Cauchy-Schwarz inequality we have
\begin{equation}\label{eq: DG error temp4}
T_3\leq \sum\limits_{B\in \mathcal{\mathring E}_h}\|\aver{\beta\nabla (u_h-w_h)\cdot
\mathbf{n}_B}\|_{L^2(B)}\|\jump{u-w_h}\|_{L^2(B)}.
\end{equation}
First, using the standard trace equality \eqref{eq: trace 1}, we have
\begin{eqnarray*}
\|\jump{u-w_h}\|_{L^2(B)}&\leq &
\|(u-w_h)|_{K_{B,1}}\|_{L^2(B)}+\|(u-w_h)|_{K_{B,2}}\|_{L^2(B)}\\
&\leq&
Ch_{K_{B,1}}^{-1/2}\left(\|u-w_h\|_{L^2(K_{B,1})}+h_{K_{B,1}}\|\nabla(u-w_h)\|_{L^2(K_{B,1})}\right)\\
&&+Ch_{K_{B,2}}^{-1/2}\left(\|u-w_h\|_{L^2(K_{B,2})}+h_{K_{B,2}}\|\nabla(u-w_h)\|_{L^2(K_{B,2})}\right).
\end{eqnarray*}
Then, by the trace inequalities \eqref{eq: trace 2} or \eqref{eq: trace IFE} , we have
\begin{equation*}
\|\aver{\beta\nabla (u_h-w_h)\cdot \mathbf{n}_B}\|_{L^2(B)}\leq
C\left(h_{K_{B,1}}^{-1/2}\|\sqrt{\beta}\nabla(u_h-w_h)\|_{L^2(K_{B,1})}+h_{K_{B,2}}^{-1/2}\|\sqrt{\beta}\nabla(u_h-w_h)\|_{L^2(K_{B,2})}\right).
\end{equation*}
Substituting the above two bounds into \eqref{eq: DG error temp4} and applying Young's inequality, we obtain
\begin{equation}\label{eq: DG error temp5}
T_3\leq
C(\delta)\left(\sum\limits_{K\in\mathcal{T}_h}h_K^{-2}\|u-w_h\|^2_{L^2(K)}+\sum\limits_{K\in\mathcal{T}_h}\|\nabla(u-w_h)\|^2_{L^2(K)}\right)+\delta\|u_h-w_h\|^2_h.
\end{equation}
For $T_4$, we use the assumption $\alpha = 1$, the Cauchy-Schwarz inequality, and Young's inequality to have
\begin{eqnarray}\label{eq: DG error temp6}
T_4
&\leq&\sum\limits_{B\in \mathcal{\mathring E}_h}\left(\frac{1}{4\delta}\frac{\sigma^0_B}{|B|}\int_{B}\jump{u-w_h}\jump{u-w_h}ds+
\delta\frac{\sigma^0_B}{|B|}\int_{B}\jump{u_h-w_h}\jump{u_h-w_h}ds\right).
\end{eqnarray}
Again, by trace inequality \eqref{eq: trace 1}, we have
\begin{eqnarray}\nonumber
\frac{\sigma^0_B}{|B|}\int_{B}\jump{u-w_h}^2ds \nonumber
&\leq&C\frac{\sigma^0_B}{|B|}\left(\|(u-w_h)|_{K_{B,1}}\|^2_{L^2(B)}+\|(u-w_h)|_{K_{B,2}}\|^2_{L^2(B)}\right)\\
\nonumber &\leq&
Ch_{K_{B,1}}^{-2}\left(\|u-w_h\|_{L^2(K_{B,1})}+h_{K_{B,1}}\|\nabla(u-w_h)\|_{L^2(K_{B,1})}\right)^2\\
&&+Ch_{K_{B,2}}^{-2}\left(\|u-w_h\|_{L^2(K_{B,2})}+h_{K_{B,2}}\|\nabla(u-w_h)\|_{L^2(K_{B,2})}\right)^2.
\label{eq: DG error temp7}
\end{eqnarray}
Using \eqref{eq: DG error temp7} in \eqref{eq: DG error temp6}, we have
\begin{eqnarray}\label{eq: DG error temp8}
T_4\leq C(\delta)\left(\sum\limits_{K\in\mathcal{T}_h}h_K^{-2}\|u-w_h\|^2_{L^2(K)}+
\sum\limits_{K\in\mathcal{T}_h}\|\nabla(u-w_h)\|^2_{L^2(K)}\right)
+\delta\|u_h-w_h\|^2_h.
\end{eqnarray}
Substituting \eqref{eq: DG error temp2}, \eqref{eq: DG error temp3}, \eqref{eq: DG error temp5} and \eqref{eq: DG error temp8} into \eqref{eq: DG error temp1} and choosing $\delta=\kappa/ 8$, we obtain
\begin{eqnarray}\nonumber
\|u_h-w_h\|_h^2&\leq&
C\sum\limits_{K\in\mathcal{T}_h}\|\nabla(u-w_h)\|^2_{L^2(K)}+C\sum\limits_{B\in\mathcal{\mathring
E}_h}\frac{|B|}{\sigma_B^0}\|\aver{\beta\nabla
(u-w_h)\cdot\mathbf{n}_B}\|^2_{L^2(B)}\\
&&+C\sum\limits_{K\in\mathcal{T}_h}h_K^{-2}\|u-w_h\|^2_{L^2(K)}\label{eq: DG error temp9}
\end{eqnarray}
Now, we let $w_h$ be the IFE interpolation $I_hu$ in \eqref{eq: DG error temp9} and use the optimal approximation
capability of linear or bilinear DG-IFE spaces \eqref{eq: interpolation global} to have
\begin{equation}\label{eq: DG error temp10}
\|u_h-I_hu\|_h^2\leq
Ch^2\|u\|^2_{\tilde H^2(\Omega)}+Ch\sum\limits_{B\in\mathcal{\mathring E}_h}\sum_{i=1,2}\|(\beta\nabla
(u-I_hu)\cdot\mathbf{n}_B)|_{K_{B,i}}\|^2_{L^2(B)}.
\end{equation}
We now bound the second term on the right hand side of \eqref{eq: DG error temp10}.  If
$K_{B,i}$, $i=1$ or $2$ is a non-interface element, we use the
trace inequality \eqref{eq: trace 2} to obtain
\begin{eqnarray}
  \|(\beta\nabla
(u-I_hu)\cdot\mathbf{n}_B)|_{K_{B,i}}\|^2_{L^2(B)}
 &\leq& C(h_{K_{B,i}}^{-1}\|\nabla
(u-I_hu)\|^2_{L^2(K_{B,i})}+h_{K_{B,i}}\|\nabla^2 u\|^2_{L^2(K_{B,i})}) \nonumber \\
   &\leq&  Ch_{K_{B,i}}\|u\|^2_{H^2(K_{B,i})}.\label{eq: bound on noninterface elems}
\end{eqnarray}
If $K_{B,i}$ is an interface element, we use \eqref{eq: interpolation error edge} to get
\begin{eqnarray}
\|(\beta\nabla
(u-I_hu)\cdot\mathbf{n}_B)|_{K_{B,i}}\|^2_{L^2(B)}\leq
C\left(h_{K_{B,i}}^2\|u\|^2_{\tilde H^3(\Omega)}+h_{K_{B,i}}\|u\|^2_{\tilde
H^2(K_{B,i})}\right).\label{eq: bound on interface elems}
\end{eqnarray}
Due to the shape regularity of mesh $\mathcal{T}_h$, we have the following bound on the union of interface elements
\begin{equation}\label{eq: bound on interface elems sum}
  \sum_{K\in \mathcal{T}_h^i} h_K^2\|u\|_{\tilde H^3(\Omega)}^2 \leq h \|u\|_{\tilde H^3(\Omega)}^2\sum_{K\in \mathcal{T}_h^i} h_K \leq C h \|u\|_{\tilde H^3(\Omega)}^2.
\end{equation}
Summing up the estimates \eqref{eq: bound on noninterface elems} and \eqref{eq: bound on interface elems} over all interior edges, and using the bound in \eqref{eq: bound on interface elems sum}, we obtain
\begin{equation}\label{eq: DG error temp11}
  \sum\limits_{B\in\mathcal{\mathring E}_h}\sum_{i=1,2}\|(\beta\nabla
(u-I_hu)\cdot\mathbf{n}_B)|_{K_{B,i}}\|^2_{L^2(B)} \leq Ch\|u\|^2_{\tilde H^3(\Omega)}+Ch\|u\|^2_{\tilde H^2(\Omega)} \leq Ch\|u\|^2_{\tilde H^3(\Omega)}.
\end{equation}
Then, substituting \eqref{eq: DG error temp11} to \eqref{eq: DG error temp10} we obtain
\begin{equation}\label{eq: DG error temp12}
\|u_h-I_hu\|_h\leq Ch\|u\|_{\tilde H^3(\Omega)}.
\end{equation}
Finally, the error estimate \eqref{eq: DG error estimates} follows from triangle inequality, \eqref{eq: DG error temp12} and \eqref{eq: interpolation h norm}.
\end{proof}

\begin{remark}
 The DG-IFE methods proposed in this article and their related error estimation can be extended to arbitrary shape-regular unstructured interface independent meshes.
\end{remark}
\begin{remark}
 The proof of Theorem 3.1 requires that the solution is piecewise $H^3$, which is higher than the usual piecewise $H^2$ assumption imposed on methods using a finite element space
 based on linear polynomials. Consequently, our error estimate here is optimal according to the rate of convergence expected from linear polynomials but not with respect to the regularity of solution space.
\end{remark}

\section{Numerical Examples}

In this section, we present numerical examples to demonstrate features of interior penalty DG-IFE methods for elliptic interface problems. Let the solution domain $\Omega$ be the open rectangle $(-1,1)\times(-1,1)$ and let the interface $\Gamma$ be the ellipse centered at $(x_0,y_0) = (-0.2,0.1)$ with semi-axes $a = \frac{\pi}{6.28}$,
$b = \frac{3}{2}a$. The interface separates $\Omega$ into two sub-domains, denoted by $\Omega^-$ and $\Omega^+$, \emph{i.e.},
\begin{equation*}
    \Omega^- = \{(x,y): r(x,y) < 1\}, ~~~\text{and}~~~
    \Omega^+ = \{(x,y): r(x,y) > 1\},
\end{equation*}
where
\begin{equation*}
  r(x,y) = \sqrt{\frac{(x-x_0)^2}{a^2} + \frac{(y-y_0)^2}{b^2}}.
\end{equation*}
The exact solution $u$ to the interface problem is chosen as follows
\begin{equation}\label{eq: true solution ellipse}
    u(x,y) =
    \left\{
      \begin{array}{ll}
        a^2b^2\frac{r^p}{\beta^-}, & \text{if~} (x,y) \in\Omega^-, \\
        a^2b^2\left(\frac{r^p}{\beta^+} + \frac{1}{\beta^-} - \frac{1}{\beta^+}\right), &\text{if~} (x,y)\in\Omega^+.
      \end{array}
    \right.
\end{equation}
Here $p$ is a parameter and we choose $p = 5$ in Examples 1 - 3 representing a solution with enough regularity, and $p=0.5$ in Example 4 representing a solution with low regularity. Note that this solution does not satisfy the homogeneous boundary condition \eqref{eq: bc}. We use this function for numerical verification because both the algorithm and the analysis in Section 2 and Section 3 can be extended to the nonhomogeneous boundary condition case via a standard treatment. \\

{\bf Example 1}: In this example, we present a group of numerical results for demonstrating the convergence of the DG-IFE methods on Cartesian triangular meshes. Additional numerical results on rectangular meshes are provided in \cite{XHe_Thesis_Bilinear_IFE,XHe_TLin_YLin_DG,He_Lin_Lin_Selective_DG}. Specifically,
the Cartesian triangular meshes $\{\mathcal{T}_h, h>0\}$ are formed by first partitioning $\Omega$ into $N\times N$ congruent squares of size $h = 2/N$ and then dividing each rectangle into two congruent triangles with one of its diagonal lines.

First, we consider the case in which $(\beta^-,\beta^+) = (1,10)$ representing a moderate discontinuity in the diffusion coefficient across the interface. The symmetric DG-IFE scheme is employed to solve the interface problem with parameters $\alpha = 1$ and $\sigma_B^0 =1000$ for all interior edges.  Errors of numerical solutions in the $L^\infty$, $L^2$, and semi-$H^1$ norms are reported in Table \ref{table: SIPG IFE error 1 10 penalty 1000 Ellipse Linear}. For comparison, we also solve the same interface problem using the continuous Galerkin linear IFE method \cite{ZLi_TLin_YLin_RRogers_linear_IFE,ZLi_TLin_XWu_Linear_IFE} on the same meshes, and the corresponding numerical results are listed in Table \ref{table: Galerkin IFE error 1 10 Ellipse Linear}.

\begin{table}[h]
\begin{center}
\begin{tabular}{|c|cc||cc||cc|}
\hline
$N$
& $\|\cdot\|_{L^\infty}$ & rate
& $\|\cdot\|_{L^2}$ & rate
& $|\cdot|_{H^1}$ & rate\\
 \hline
$10$      &$2.8333E{-2}$ &         &$3.7991E{-2}$ &         &$6.7917E{-1}$ &         \\
$20$      &$8.4503E{-3}$ & 1.7454  &$9.3605E{-3}$ & 2.0210  &$3.4653E{-1}$ & 0.9708  \\
$40$      &$2.5075E{-3}$ & 1.7527  &$2.3062E{-3}$ & 2.0210  &$1.7456E{-1}$ & 0.9893  \\
$80$      &$7.2318E{-4}$ & 1.7938  &$5.6970E{-4}$ & 2.0173  &$8.7630E{-2}$ & 0.9942  \\
$160$     &$2.0134E{-4}$ & 1.8447  &$1.4140E{-4}$ & 2.0105  &$4.3903E{-2}$ & 0.9971  \\
$320$     &$5.4720E{-5}$ & 1.8795  &$3.5178E{-5}$ & 2.0070  &$2.1972E{-2}$ & 0.9986  \\
$640$     &$1.4450E{-5}$ & 1.9210  &$8.7729E{-6}$ & 2.0035  &$1.0991E{-2}$ & 0.9994  \\
$1280$    &$3.7496E{-6}$ & 1.9463  &$2.1903E{-6}$ & 2.0019  &$5.4965E{-3}$ & 0.9997  \\
\hline
\end{tabular}
\end{center}
\caption{Errors of linear DG-IFE solutions with $\beta^- = 1$, $\beta^+ = 10$, $\sigma_B^0 = 1000$.}
\label{table: SIPG IFE error 1 10 penalty 1000 Ellipse Linear}
\end{table}

\begin{table}[h]
\begin{center}
\begin{tabular}{|c|cc||cc||cc|}
\hline
$N$
& $\|\cdot\|_{L^\infty}$ & rate
& $\|\cdot\|_{L^2}$ & rate
& $|\cdot|_{H^1}$ & rate\\
 \hline
$10$      &$2.8619E{-2}$ &         &$4.4051E{-2}$ &         &$6.7876E{-1}$ &         \\
$20$      &$1.1416E{-2}$ & 1.3259  &$1.1333E{-2}$ & 1.9587  &$3.4808E{-1}$ & 0.9635  \\
$40$      &$5.3027E{-3}$ & 1.1062  &$2.8882E{-3}$ & 1.9722  &$1.7641E{-1}$ & 0.9805  \\
$80$      &$1.9396E{-3}$ & 1.4510  &$7.3078E{-4}$ & 1.9827  &$8.9155E{-2}$ & 0.9845  \\
$160$     &$1.0689E{-3}$ & 0.8596  &$1.8726E{-4}$ & 1.9644  &$4.5387E{-2}$ & 0.9740  \\
$320$     &$5.4774E{-4}$ & 0.9646  &$5.1220E{-5}$ & 1.8702  &$2.3305E{-2}$ & 0.9617  \\
$640$     &$2.7498E{-4}$ & 0.9942  &$1.6771E{-5}$ & 1.6107  &$1.2277E{-2}$ & 0.9247  \\
$1280$    &$1.4113E{-4}$ & 0.9623  &$7.1759E{-6}$ & 1.2248  &$6.7321E{-3}$ & 0.8668  \\
\hline
\end{tabular}
\end{center}
\caption{Errors of the classic Galerkin IFE solutions with $\beta^- = 1$, $\beta^+ = 10$.}
\label{table: Galerkin IFE error 1 10 Ellipse Linear}
\end{table}

The data in Table \ref{table: SIPG IFE error 1 10 penalty 1000 Ellipse Linear} clearly demonstrate that the convergence rate of the DG-IFE method in the semi-$H^1$ norm is optimal which corroborates the {\it a priori} error estimates \eqref{eq: DG error estimates} for DG-IFE methods since the semi-$H^1$ norm is part of the energy norm. In addition, the data in this table indicate that the convergence rate of the DG-IFE method in the $L^2$ norm is also optimal. However, from Table
\ref{table: Galerkin IFE error 1 10 Ellipse Linear}, we can see that the convergence rates of the Galerkin IFE solution in the $L^2$ and $H^1$ norms start to deteriorate when the mesh size becomes smaller than $h = 2/320$. This comparison indicates that the DG-IFE methods are more stable than the continuous Galerkin IFE method.

Next we consider the case involving a larger discontinuity in the diffusion coefficient, \emph{i.e.}, $\beta^- =1$, and $\beta^+ = 1000$. We use nonsymmetric DG-IFE scheme for this experiment and choose $\sigma_B^0 = 1000$ for all interior edges. As demonstrated by the data in Table \ref{table: NIPG IFE error 1 1000 penalty 1000 Ellipse Linear}, the DG-IFE solutions converge optimally in the $L^2$ and semi-$H^1$ norms.
\\

\begin{table}[h]
\begin{center}
\begin{tabular}{|c|cc||cc||cc|}
\hline
$N$
& $\|\cdot\|_{L^\infty}$ & rate
& $\|\cdot\|_{L^2}$ & rate
& $|\cdot|_{H^1}$ & rate\\
 \hline
$10$      &$1.9381E{-2}$ &         &$1.5338E{-2}$ &         &$2.1012E{-1}$ &         \\
$20$      &$1.2420E{-2}$ & 0.6420  &$6.0704E{-3}$ & 1.3373  &$1.3141E{-1}$ & 0.6772  \\
$40$      &$4.0332E{-3}$ & 1.6227  &$1.4957E{-3}$ & 2.0210  &$6.9522E{-2}$ & 0.9185  \\
$80$      &$9.9934E{-4}$ & 2.0129  &$3.6124E{-4}$ & 2.0498  &$3.5490E{-2}$ & 0.9701  \\
$160$     &$2.7965E{-4}$ & 1.8374  &$8.9863E{-5}$ & 2.0072  &$1.7949E{-2}$ & 0.9835  \\
$320$     &$8.0700E{-5}$ & 1.7930  &$2.1864E{-5}$ & 2.0392  &$9.0223E{-3}$ & 0.9924  \\
$640$     &$2.2017E{-5}$ & 1.8740  &$5.3914E{-6}$ & 2.0198  &$4.5229E{-3}$ & 0.9963  \\
$1280$    &$5.9615E{-6}$ & 1.8848  &$1.3343E{-6}$ & 2.0146  &$2.2641E{-3}$ & 0.9983  \\
\hline
\end{tabular}
\end{center}
\caption{Errors of linear DG-IFE solutions with $\beta^- = 1$, $\beta^+ = 1000$, $\sigma_B^0 = 1000$.}
\label{table: NIPG IFE error 1 1000 penalty 1000 Ellipse Linear}
\end{table}

{\bf Example 2}: From Tables \ref{table: SIPG IFE error 1 10 penalty 1000 Ellipse Linear} and \ref{table: Galerkin IFE error 1 10 Ellipse Linear}, it is interesting to note that errors in the DG-IFE solutions gauged in $L^\infty$ norm are much smaller than those in the classic Galerkin IFE solutions when the mesh size is small enough.
It has been observed that the classic IFE solution has a so called ``crown" shortcoming as demonstrated by the plot on the left side of Figure \ref{fig: SIPDG IFE error 1 10 ellipse}, meaning its point-wise accuracy is much worse in the vicinity of the interface than the rest of the solution domain. We think this severity of inaccuracy is caused by the discontinuity in the IFE functions across the interface edges. Nevertheless, the DG-IFE methods contain penalty terms that can alleviate the adverse impacts from the
discontinuity across element edges, especially those from interface edges. Therefore, DG-IFE methods can usually outperform the classic Galerkin IFE method around the interface as demonstrated by the plot on the right side of Figure \ref{fig: SIPDG IFE error 1 10 ellipse}. IFE solutions in these plots are generated on a mesh formed by partitioning $\Omega$ into $160 \times 160$ congruent rectangles first, then generating triangular elements by the diagonal line of these rectangles.\\

\begin{figure}[htb]
  \centering
  \includegraphics[width=.49\textwidth]{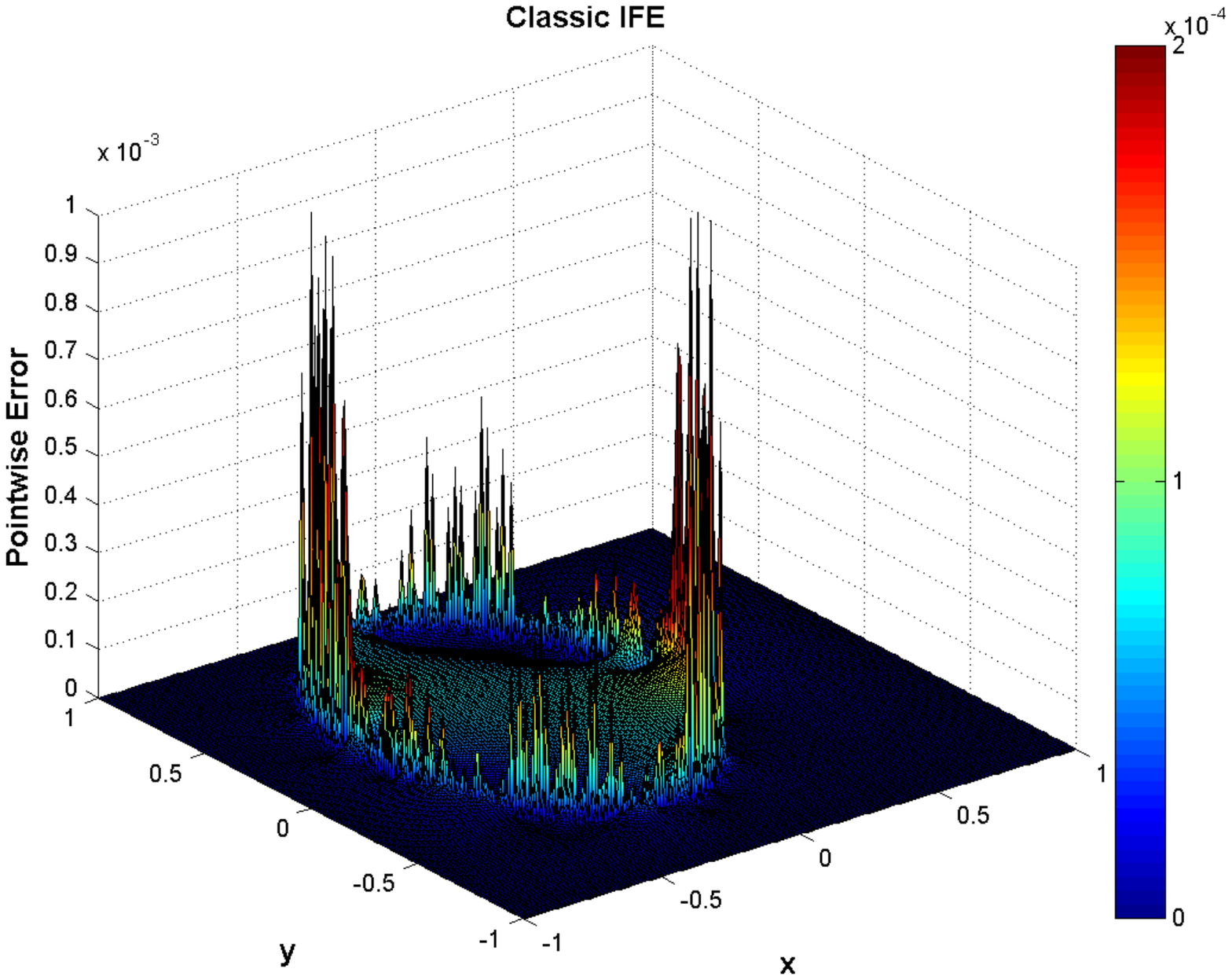}
  \includegraphics[width=.49\textwidth]{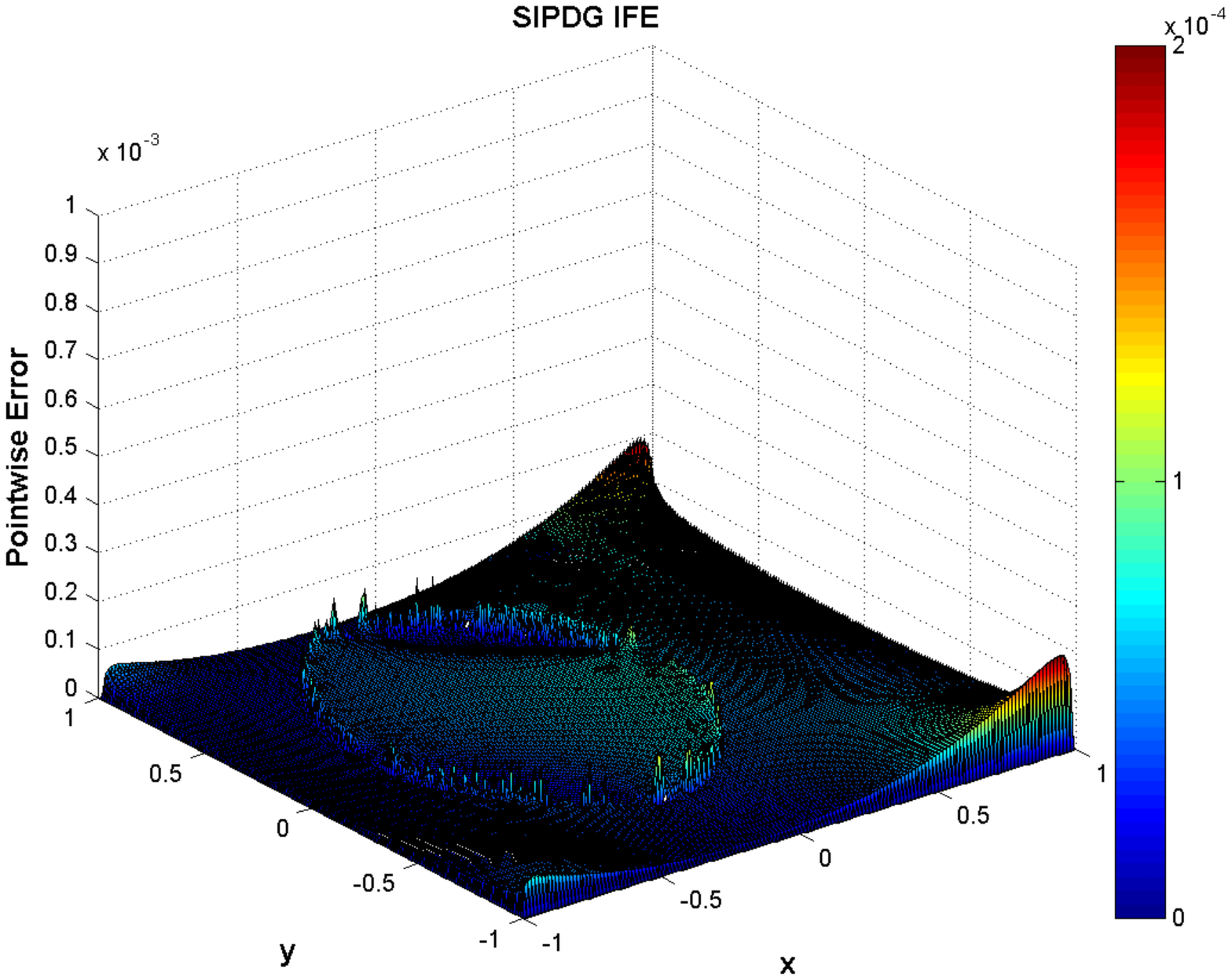}\\
  \caption{Point-wise error of Galerkin IFE solution and DG-IFE solution.}
  \label{fig: SIPDG IFE error 1 10 ellipse}
\end{figure}

{\bf Example 3}: One desirable feature of the DG formulation is the local adaptivity in mesh or polynomials because this formulation does not require the inter-element continuity of finite element functions. The combination of IFE spaces and the DG formulation leads to a new class of finite element methods that allow local mesh refinement while maintaining the possibility of using the desirable structured Cartesian meshes for solving problems with nontrivial interface geometry. This example is for demonstrating this feature of the DG-IFE methods.

The discontinuity in the coefficient of the interface problem limits the smoothness of the exact solution around interface. The low regularity of the solution around interface usually has an adverse impact on the accuracy of the numerical solution around the interface. To overcome this challenge, one can employ more finite element functions around the interface and a way to achieve this is to refine interface elements. To demonstrate this idea, we consider the same example described above for a larger coefficient jump ($\beta^- =1$, $\beta^+ = 1000$). We start with a uniform Cartesian mesh $\mathcal{T}_h^{(0)}$ consisting of $10\times 10$ rectangles. For $k \geq 1$, the mesh $\mathcal{T}_h^{(k)}$ is generated by refining the previous mesh $\mathcal{T}_h^{(k-1)}$ via cutting each of its interface elements into four congruent rectangles by connecting midpoints on opposite edges. We then solve the
interface problem \eqref{eq: pde} - \eqref{eq: jump2} on a sequence of 6 such meshes generated by the refinement procedure described above using the nonsymmetric DG-IFE method with $\sigma_B^0 = 1000$ on all internal edges. Errors in the $L^\infty$, $L^2$, and semi-$H^1$ norms generated on each mesh are presented in Table \ref{table: local refinement error}. In the second column, $|\mathcal{T}_h^{(k)}|$ denotes the number of elements in the mesh $\mathcal{T}_h^{(k)}$. The numbers of degrees of freedom are listed in the third column. The initial mesh $\mathcal{T}_h^{(0)}$ and the refined meshes $\mathcal{T}_h^{(2)}$ and $\mathcal{T}_h^{(4)}$ are illustrated in Figure \ref{fig: mesh refinement}.

\begin{figure}[ht]
  \centering
  \includegraphics[width=.25\textwidth]{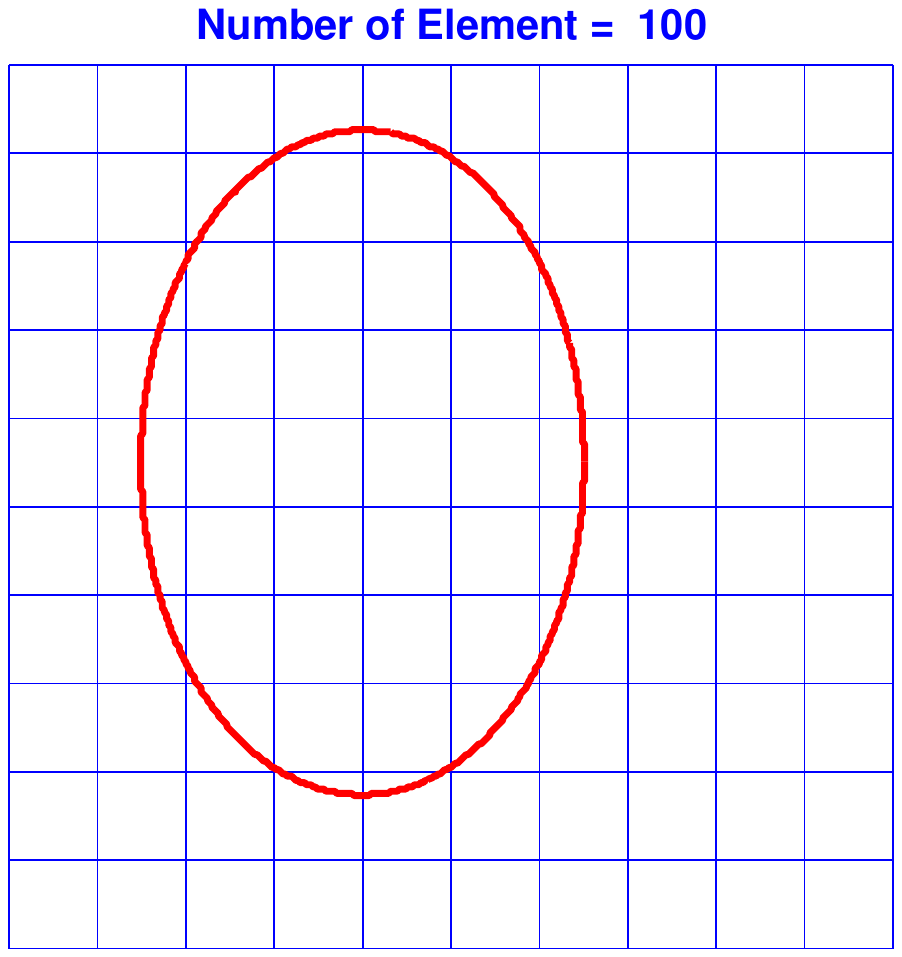}~~~~
  \includegraphics[width=.25\textwidth]{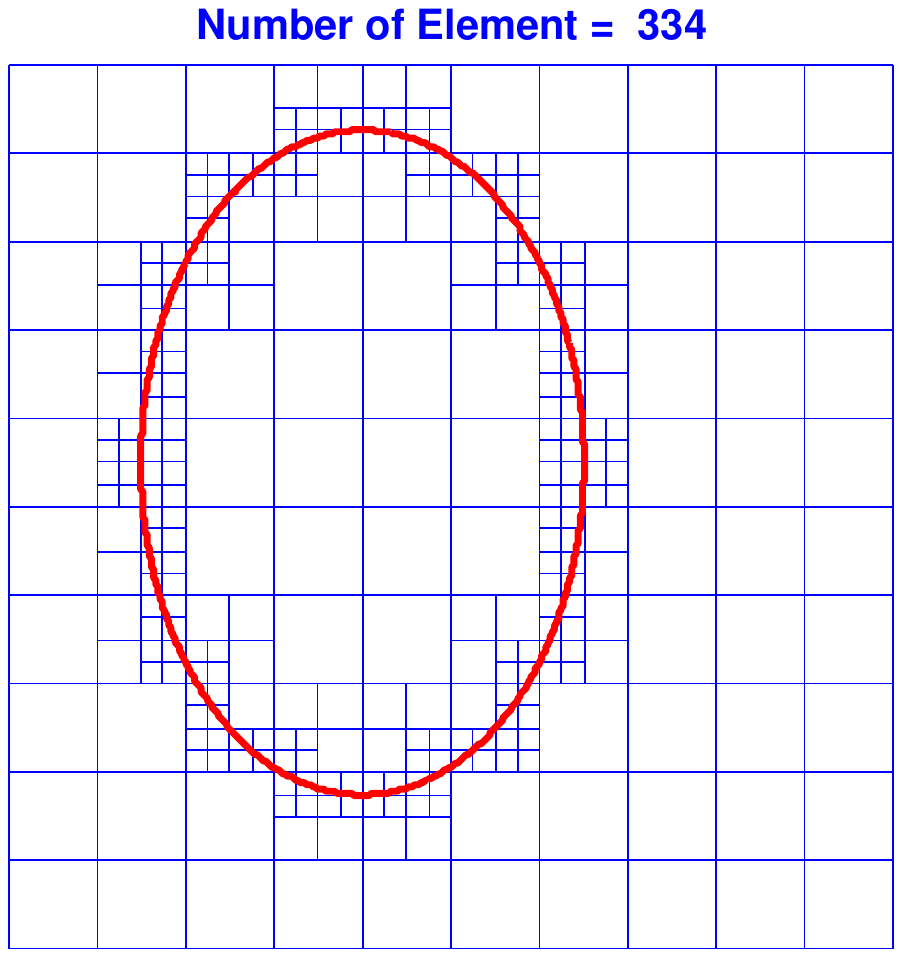}~~~~
  \includegraphics[width=.25\textwidth]{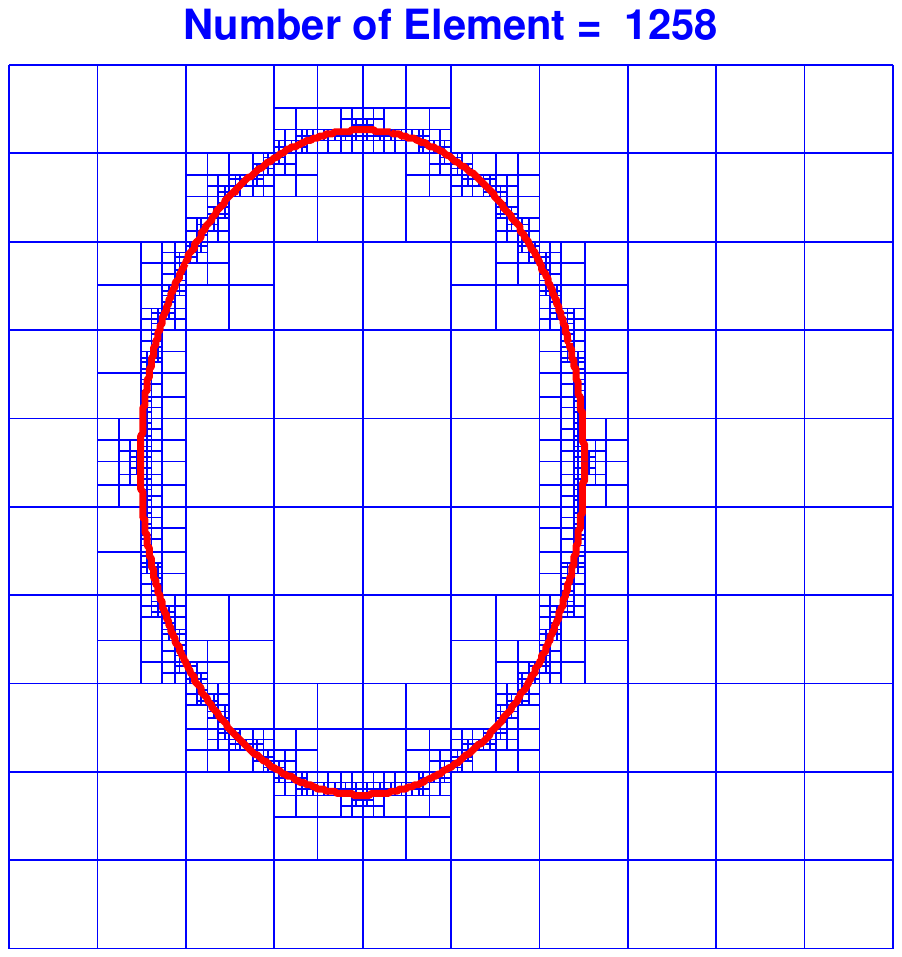}\\
  \caption{Solution meshes $\mathcal{T}_h^{(0)}$, $\mathcal{T}_h^{(2)}$, and $\mathcal{T}_h^{(4)}$.}
  \label{fig: mesh refinement}
\end{figure}

\begin{table}[ht]
\begin{center}
\begin{tabular}{|c|c|c|c|c|c|}
\hline
Mesh & $|\mathcal{T}_h^{(i)}|$ & $DoF$
& $\|\cdot\|_{L^\infty}$
& $\|\cdot\|_{L^2}$
& $|\cdot|_{H^1}$ \\
 \hline
$\mathcal{T}_h^{(0)}$ & 100 & 400  & 1.5064E{-2} & 1.5667E{-2} & 1.9393E{-1} \\
$\mathcal{T}_h^{(1)}$ & 178 & 712  & 1.9242E{-2} & 8.7358E{-3} & 1.5041E{-1} \\
$\mathcal{T}_h^{(2)}$ & 334 &1336  & 1.2799E{-2} & 5.9608E{-3} & 1.2761E{-1} \\
$\mathcal{T}_h^{(3)}$ & 646 &2584  & 1.2789E{-2} & 5.7452E{-3} & 1.2321E{-1} \\
$\mathcal{T}_h^{(4)}$ &1258 &5032  & 1.2484E{-2} & 5.6245E{-3} & 1.2233E{-1} \\
$\mathcal{T}_h^{(5)}$ &2470 &9880  & 1.2431E{-2} & 5.5988E{-3} & 1.2213E{-1} \\
$\mathcal{T}_h^{(6)}$ &4882 &19528 & 1.2424E{-2} & 5.5940E{-3} & 1.2209E{-1} \\
\hline
\end{tabular}
\end{center}
\caption{Errors of NIPDG-IFE solutions on meshes with local refinement}
\label{table: local refinement error}
\end{table}

The data in Table \ref{table: local refinement error} show that the global error in the $L^2$ and semi-$H^1$ norms are significantly reduced in the first two steps of local mesh refinements. But further refinements performed on interface elements do not reduce the global error as much as the first two steps. We believe this is because, after the first two refinements, errors of the DG-IFE solutions over non-interface elements become more significant. To further increase the accuracy of DG-IFE solutions, refinement on non-interface elements is also necessary. To demonstrate this idea, we simulate an adaptive local mesh refinement over the whole solution domain. Since {\it a posteriori } error estimators for IFE methods are not available yet, we use the actual error as the ``ideal" error indicator to guide the local refinement just for a proof of concept.

We start from a uniform Cartesian mesh $\mathcal{T}_h^{(0)}$ consisting of $10\times 10$ rectangles. For $k \geq 0$, we produce a DG-IFE solution $u_h$ for the interface problem on the mesh $\mathcal{T}_h^{(k)}$ and compute the local semi-$H^1$ norm error $|u-u_h|_{H^1(T)}$ on each element of $\mathcal{T}_h^{(k)}$. We sort these local errors from the largest to the smallest and use this order to form the smallest collection $\tilde{\mathcal{T}}_h^{(k)}$ of the first few elements such that
\begin{equation}\label{eq: refine rule}
  \sum_{T\in \tilde{\mathcal{T}}_h^{(k)}}|u-u_h|^2_{H^1(T)} \geq \theta |u-u_h|_{H^1(\Omega)}^2.
\end{equation}
Then, we generate a new mesh $\mathcal{T}_h^{(k+1)}$ by refining $\mathcal{T}_h^{(k)}$ via cutting each of the elements in $\tilde{\mathcal{T}}_h^{(k)}$ into four congruent rectangles by connecting midpoints on its opposite edges. The computation repeats over the new mesh $\mathcal{T}_h^{(k+1)}$.

We choose $\theta = 0.2$ and conduct adaptive DG-IFE scheme on each locally refined mesh. In Figure \ref{fig: NIPDG IFE error 1 1000}, we show the initial mesh $\mathcal{T}_h^{(0)}$, the refined meshes $\mathcal{T}_h^{(7)}$,$\mathcal{T}_h^{(12)}$, and $\mathcal{T}_h^{(17)}$, and errors of the DG-IFE solutions generated on those meshes. From these plots, we can easily see that the global error in the DG-IFE solution is reduced as the local mesh refinement automatically deploys smaller elements at locations needed according to the ``ideal" error indicator while maintaining the Cartesian structure of the meshes.

To see the effectiveness of the adaptive DG-IFE methods, we compare the errors in DG-IFE solutions generated via adaptive mesh refinement to errors of DG-IFE solutions generated on uniform meshes with comparable degrees of freedom in Figure \ref{fig: convergence of mesh refinement}. The semi-$H^1$ norm errors presented in the right plot in Figure \ref{fig: convergence of mesh refinement} shows that the magnitude of errors in adaptive DG-IFE method are smaller than in the method with a uniform mesh when their degrees of freedom are comparable. However, the order of convergence for both schemes are optimal by comparing their errors with the reference line of slope $-1/2$ (the same criteria is used in \cite{Cai_Ye_Zhang_DG_Interface}).
We note that the exact solution $u$ defined in \eqref{eq: true solution ellipse} with $p=5$ is piecewisely smooth, \emph{i.e.} $u\in \tilde H^3(\Omega)$ (see the left plot in Figure \ref{fig: convergence of mesh refinement}), although the global regularity is impacted by the discontinuity of the coefficients. The optimal convergence of the numerical solution in uniform mesh refinement confirms our theoretical error estimate \eqref{eq: DG error estimates} in Section 3.

\begin{figure}[ht]
  \centering
  \includegraphics[width=.25\textwidth]{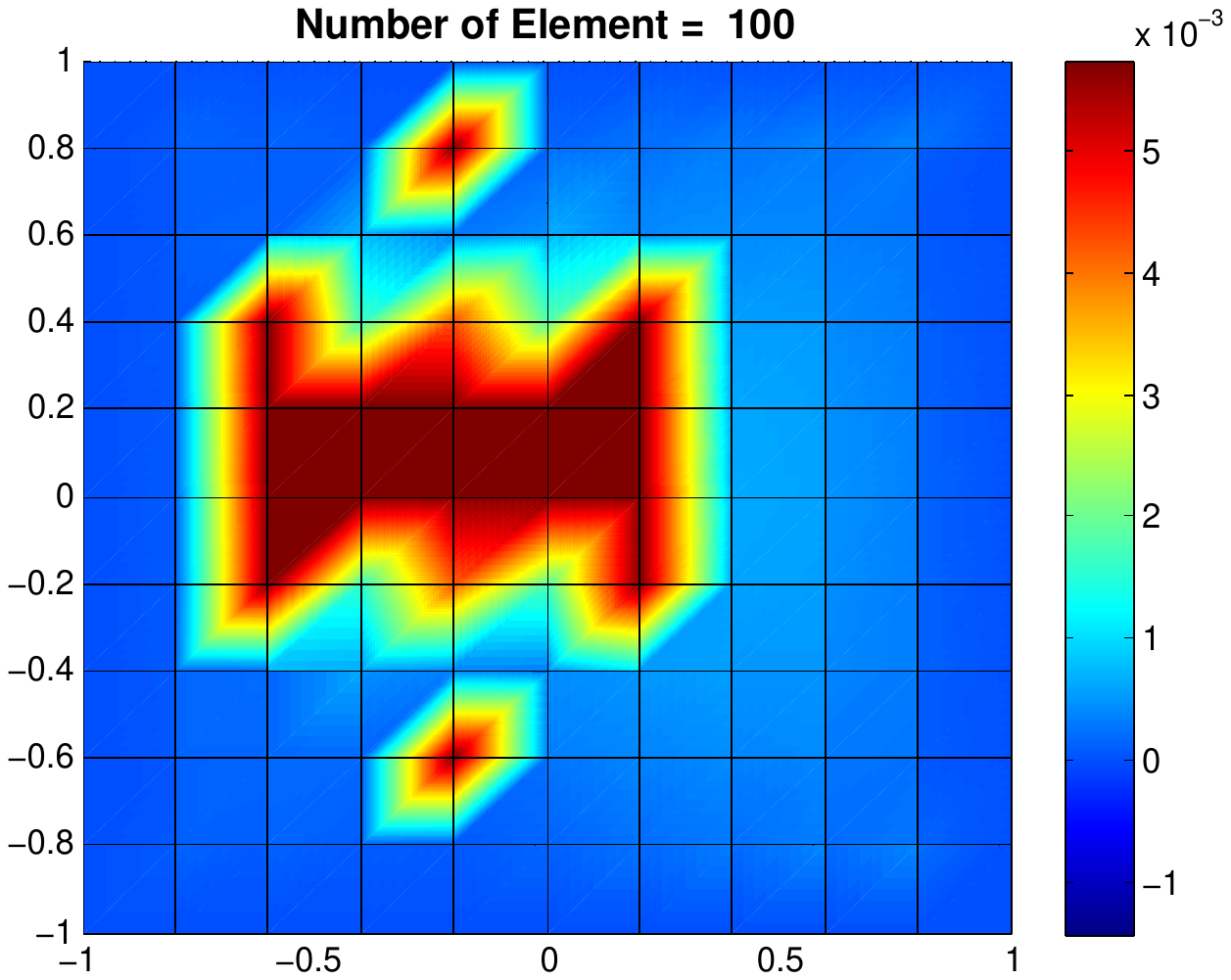}\!\!\!\!\!\!
  \includegraphics[width=.25\textwidth]{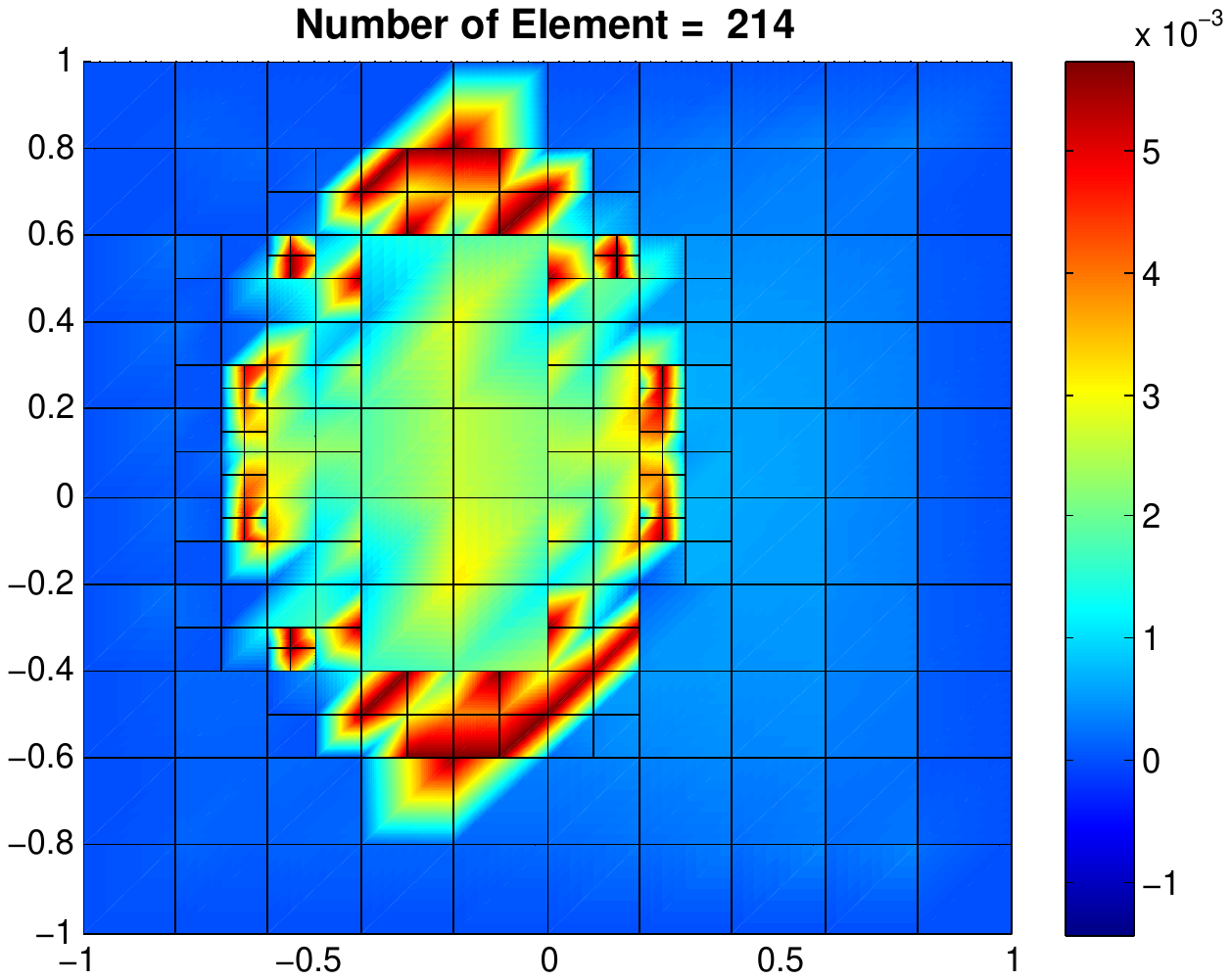}\!\!\!\!\!\!
  \includegraphics[width=.25\textwidth]{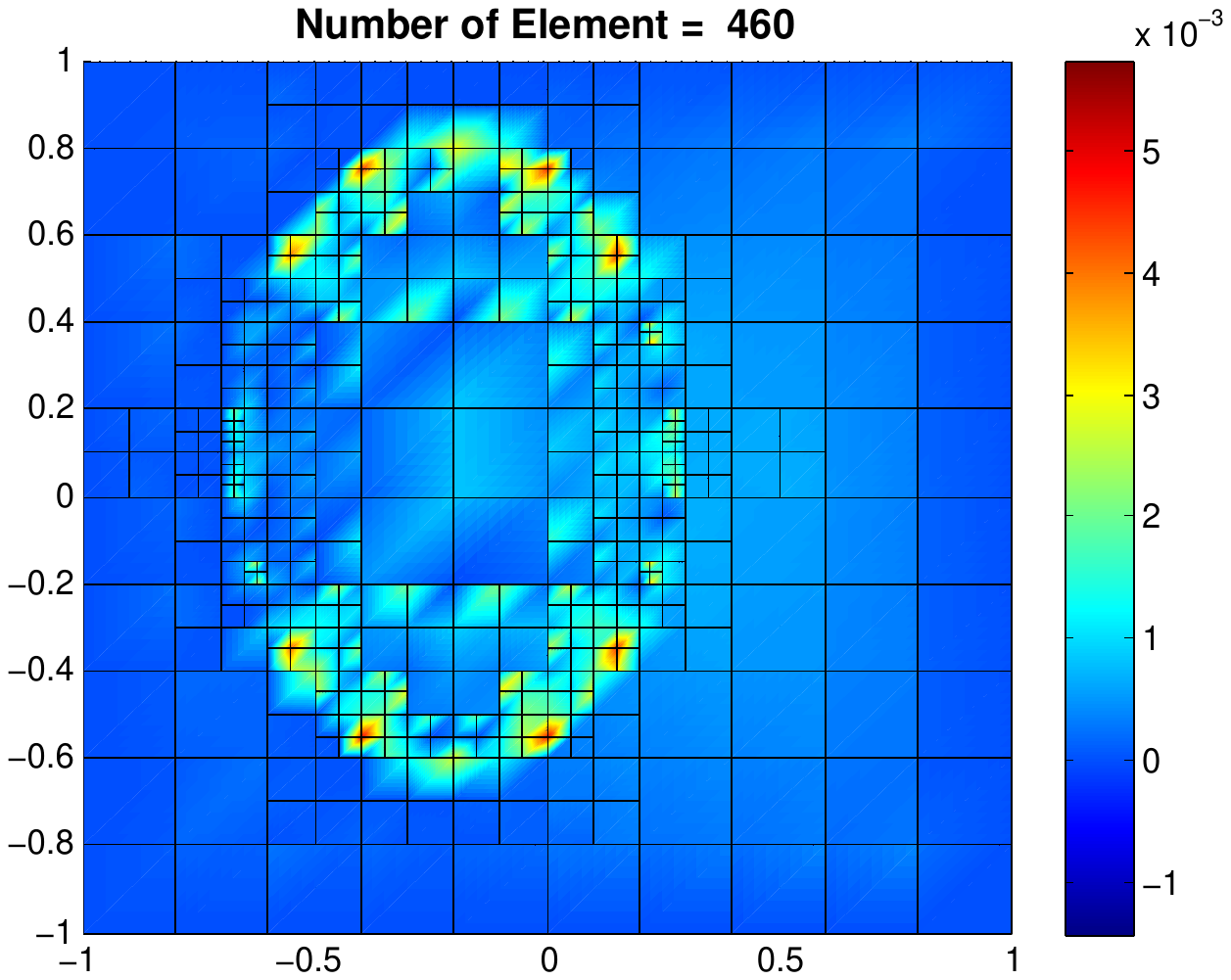}\!\!\!\!\!\!
  \includegraphics[width=.25\textwidth]{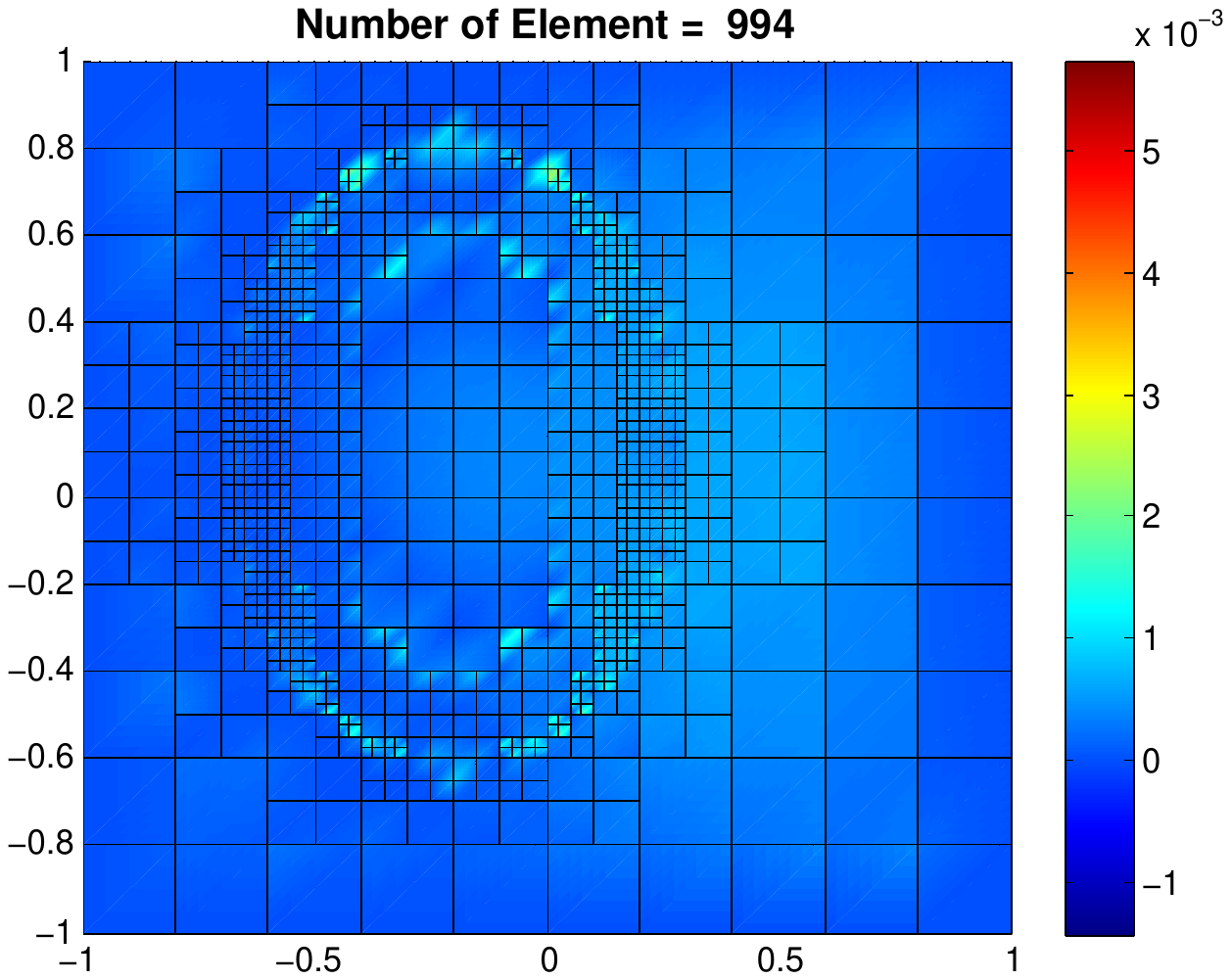}\\
  \caption{Point-wise error of DG-IFE solutions on meshes: $\mathcal{T}_h^{(0)}$, $\mathcal{T}_h^{(7)}$,$\mathcal{T}_h^{(12)}$, and $\mathcal{T}_h^{(17)}$}
  \label{fig: NIPDG IFE error 1 1000}
\end{figure}

\begin{figure}[htb]
  \centering
  \includegraphics[width=.4\textwidth]{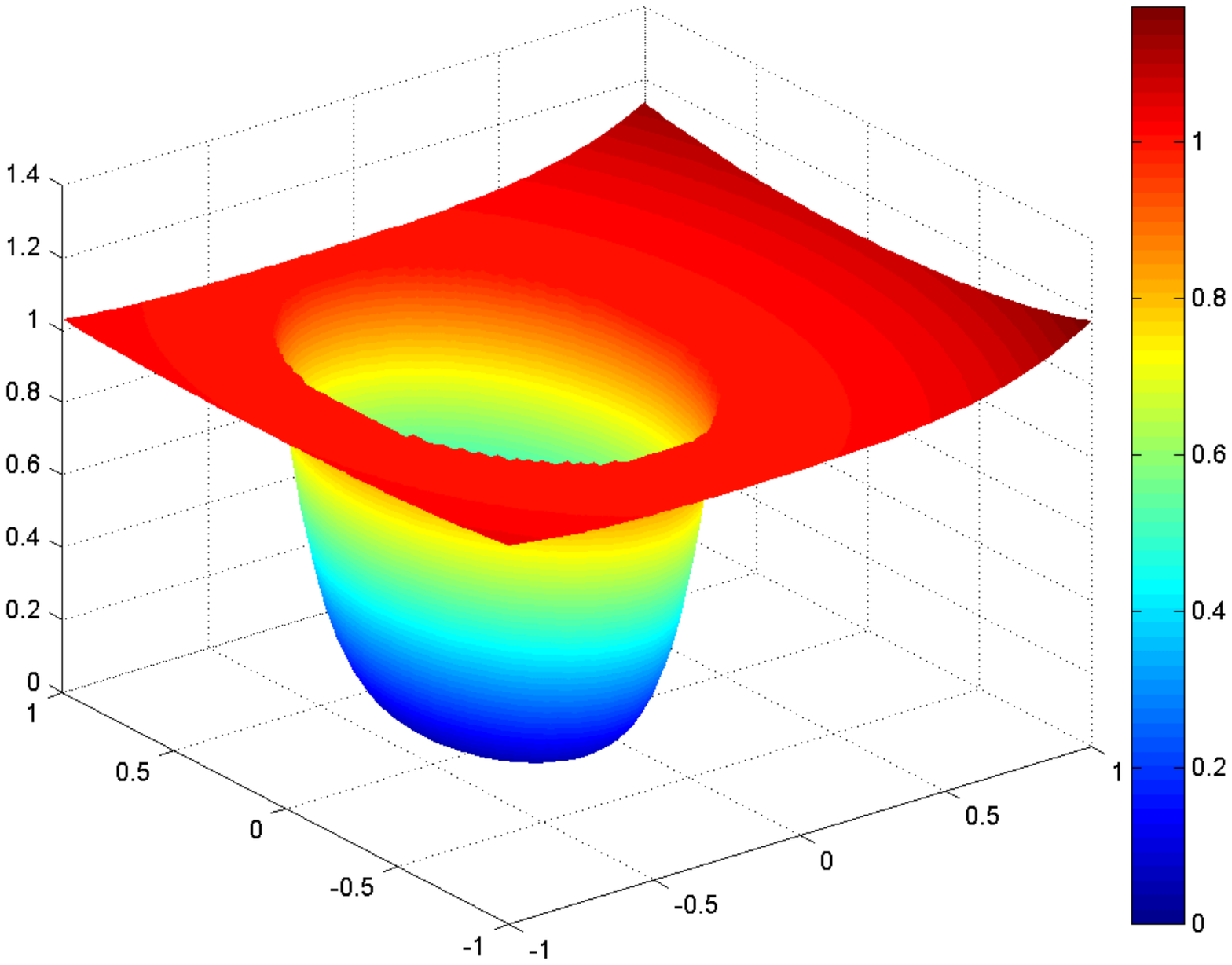}~~~
  \includegraphics[width=.4\textwidth]{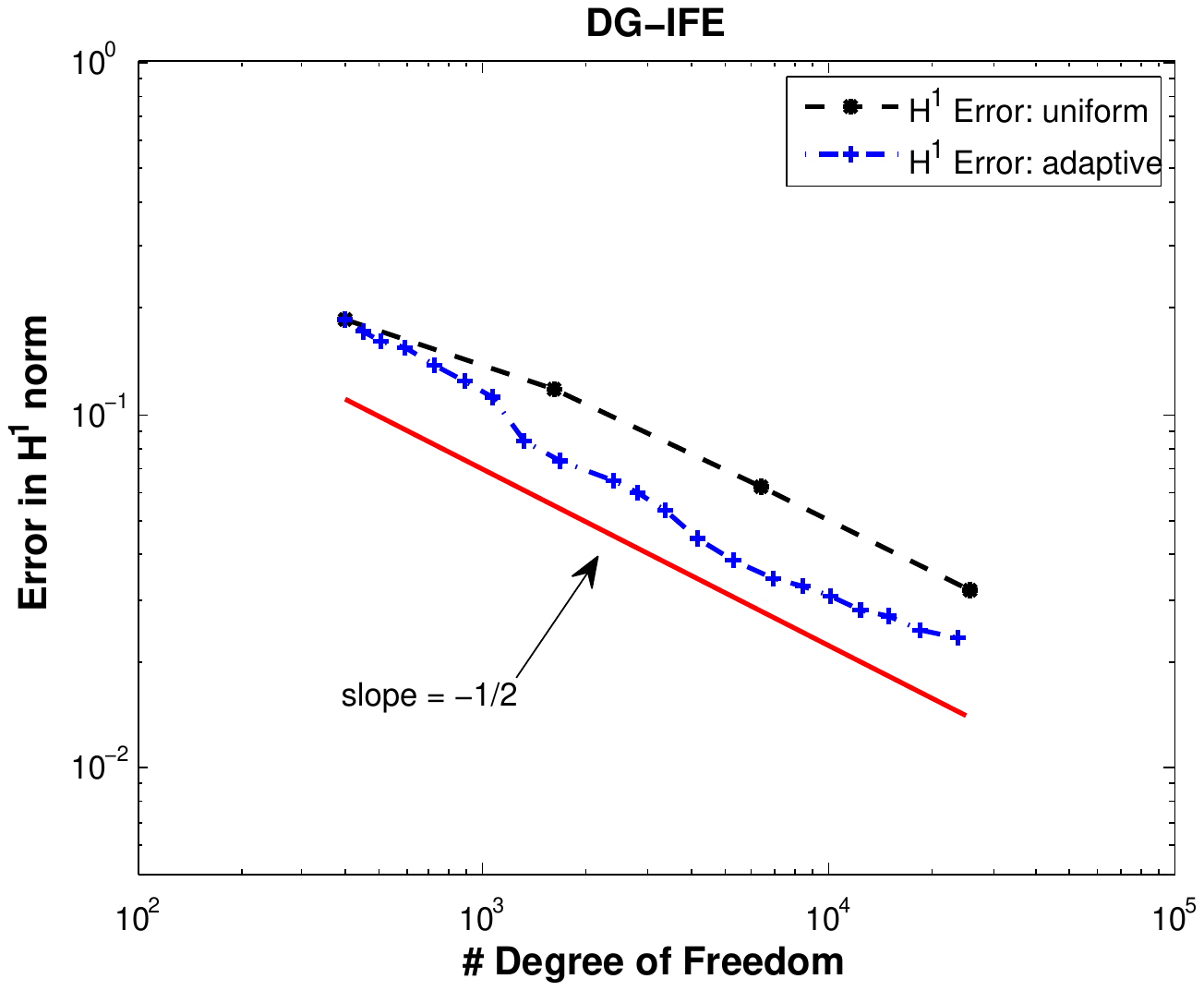}
  \\
  \caption{Left: the exact solution $u$ in Example 3. Right: a log-log scale plot of errors of in numerical solutions generated by the adaptive and uniform mesh refinement.
  The red line with a $-0.5$ slope is for the reference of optimal convergence.}
  \label{fig: convergence of mesh refinement}
\end{figure}

{\bf Example 4}: In many applications, because of the insufficient regularity in the data, solutions to the involved boundary value problems may not be piecewisely smooth enough for a certain convergence theorem to hold. In such cases, finite element or DG methods based on uniform mesh refinement usually fail to converge optimally, but adaptive FE or DG methods with suitably designed mesh refinement strategies can still generate optimally convergent numerical solutions \cite{Cai_Ye_Zhang_DG_Interface, Cai_Zhang_Flux_Recovery}. For interface problems with discontinuous coefficients, the phenomenon is similar. In this example, we demonstrate behaviors of DG-IFE method in adaptive and uniform mesh refinements for solving interface problems whose exact solution has singularity. In particular, this example indicates that the DG-IFE method with adaptive refinement on interface independent meshes can satisfactorily handle interface problems whose exact solutions are less smooth.

We choose the exact solution $u$ in the form of \eqref{eq: true solution ellipse} with $p=0.5$ such that it does not satisfy the regularity condition required by Theorem \ref{th:DG_convergence} on the convergence of the DG-IFE method. This lack of regularity is caused by the singularity of the exact solution at the center of the ellipse as depicted in the left plot of Figure \ref{fig: exactu p=0.5}. The coefficients are set to be $\beta^-=1$, $\beta^+ = 10$. First, we present the data generated by the DG-IFE method with uniform mesh refinement in Table \ref{table: NIPG IFE error 1 10 penalty 100 Ellipse BiLinear p0.5}. Specifically, these data are produced by the nonsymmetric DG-IFE method with the penalty $\sigma_B^0 = 100$ on every edge. Errors in $L^\infty$, $L^2$, and semi-$H^1$ norms are reported, and the convergence rates
of the DG-IFE method are obviously not optimal in all three corresponding norms.

\begin{table}[h]
\begin{center}
\begin{tabular}{|c|cc||cc||cc|}
\hline
$N$
& $\|\cdot\|_{L^\infty}$ & rate
& $\|\cdot\|_{L^2}$ & rate
& $|\cdot|_{H^1}$ & rate\\
 \hline
$10$      &$4.2599E{-2}$ &         &$1.8474E{-2}$ &         &$1.0415E{-1}$ &         \\
$20$      &$5.7301E{-2}$ & -.4278  &$8.0329E{-3}$ & 1.2015  &$5.6787E{-2}$ & 0.8751  \\
$40$      &$4.5270E{-2}$ & 0.3400  &$5.5008E{-3}$ & 0.5463  &$4.2830E{-2}$ & 0.4069  \\
$80$      &$3.5386E{-2}$ & 0.3554  &$3.8353E{-3}$ & 0.5203  &$3.2011E{-2}$ & 0.4201  \\
$160$     &$2.7412E{-2}$ & 0.3684  &$2.6964E{-3}$ & 0.5083  &$2.3774E{-2}$ & 0.4292  \\
$320$     &$2.1075E{-2}$ & 0.3793  &$1.9024E{-3}$ & 0.5032  &$1.7573E{-2}$ & 0.4361  \\
$640$     &$1.6098E{-2}$ & 0.3886  &$1.3441E{-3}$ & 0.5012  &$1.2940E{-2}$ & 0.4415  \\
$1280$    &$1.2237E{-2}$ & 0.3967  &$9.5014E{-4}$ & 0.5005  &$9.5021E{-3}$ & 0.4461  \\
\hline
\end{tabular}
\end{center}
\caption{Errors of bilinear nonsymmetric DG-IFE solutions with $p=0.5$}
\label{table: NIPG IFE error 1 10 penalty 100 Ellipse BiLinear p0.5}
\end{table}

Next, we report the performance of the adaptive DG-IFE method for solving the same interface problem with this less smooth exact solution. As in Example 3, we use the exact error as an ``ideal" error indicator for mesh refinement just for a proof of concept. Starting with a uniform mesh $\mathcal{T}_h^{(0)}$ consisting of $10\times 10$ rectangles, we perform the local mesh refinement based on the same rule as the one in \eqref{eq: refine rule} with the threshold $\theta = 0.2$.
Errors in semi-$H^1$ norm are depicted in Figure \ref{fig: exactu p=0.5}, in which, as a comparison, errors from uniform mesh refinement are also plotted. It is obvious that the adaptive DG-IFE method is far more accurate than
the DG-IFE method based uniform meshes when their numbers of degrees of freedom are comparable. Moreover, comparing the errors with the reference line of slope $-1/2$, it is obvious that the rate of convergence of the adaptive DG-IFE method is close to optimal from the point view of the degrees of freedom while the rate of convergence of
DG-IFE method based on uniform mesh is not optimal. Some meshes in the process of the local refinement are presented in Figure \ref{fig: mesh low regularity} from which one can observe that the refinement is around the center of the ellipse where the exact solution is singular.

\begin{figure}[htb]
  \centering
  \includegraphics[width=.4\textwidth]{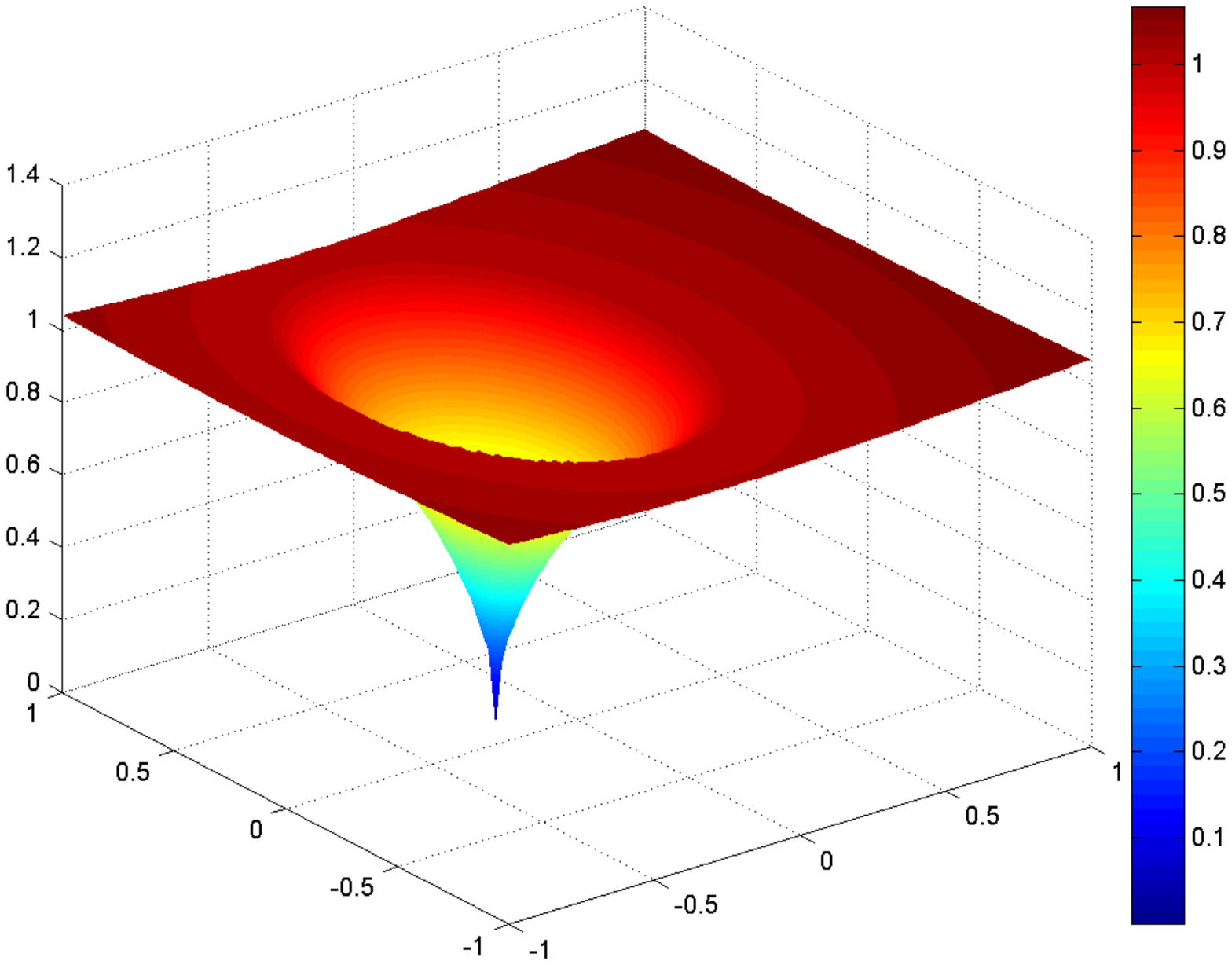}
  \includegraphics[width=.4\textwidth]{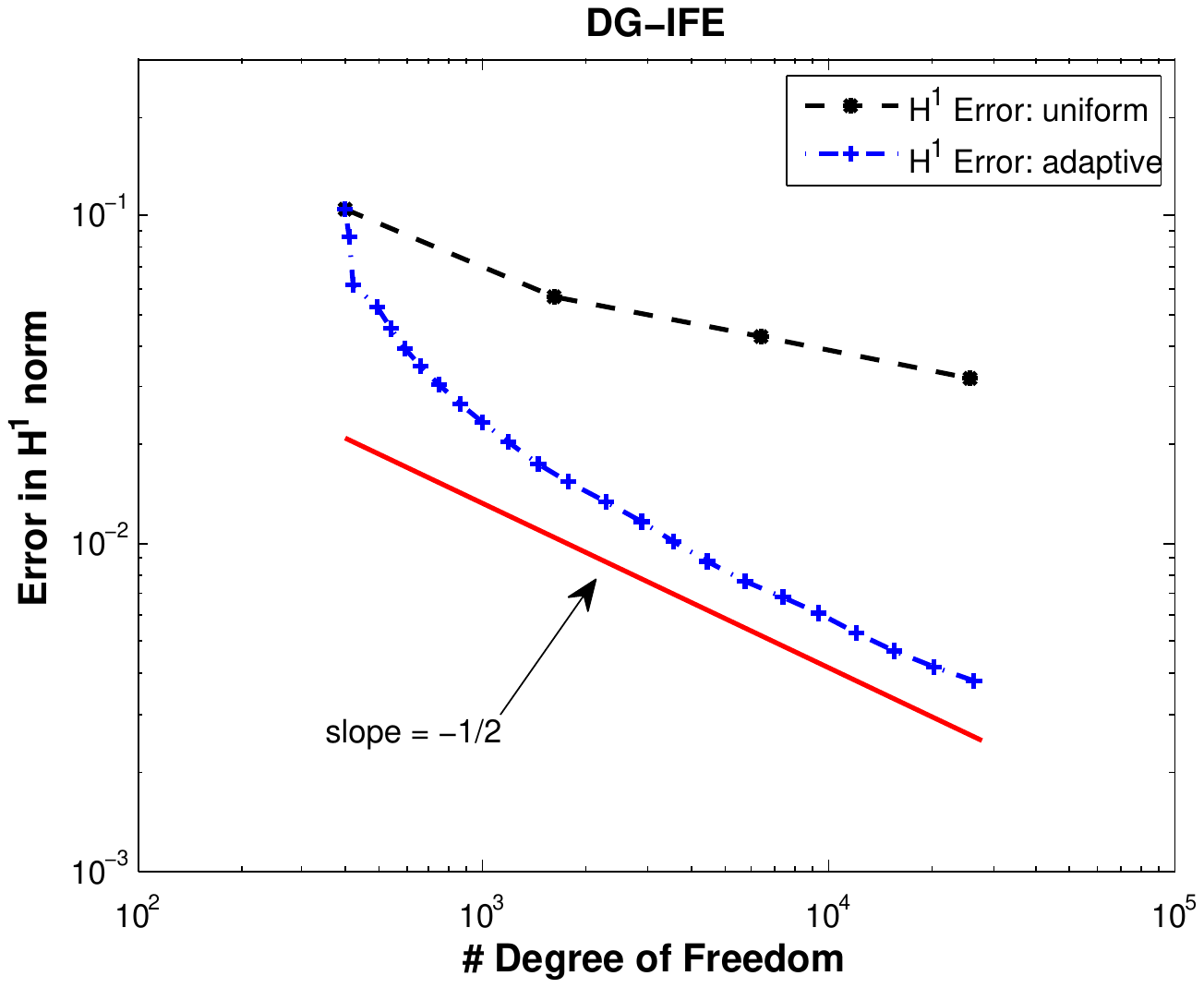}
  \caption{Left: the exact solution $u$ in Example 4. Right: a log-log scale plot of errors of in numerical solutions generated by the adaptive and uniform mesh refinement.
  The red line with a $-0.5$ slope is for the reference of optimal convergence.}
  \label{fig: exactu p=0.5}
\end{figure}

\begin{figure}[ht]
  \centering
  \includegraphics[width=.21\textwidth]{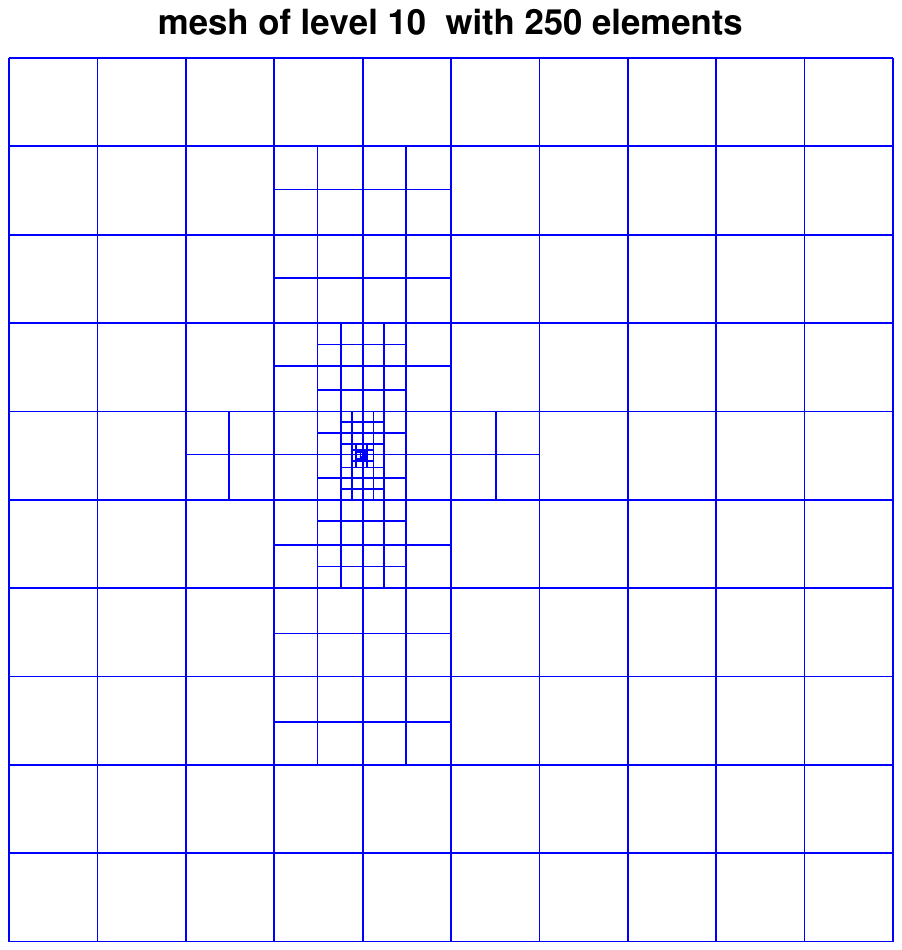}~~
  \includegraphics[width=.21\textwidth]{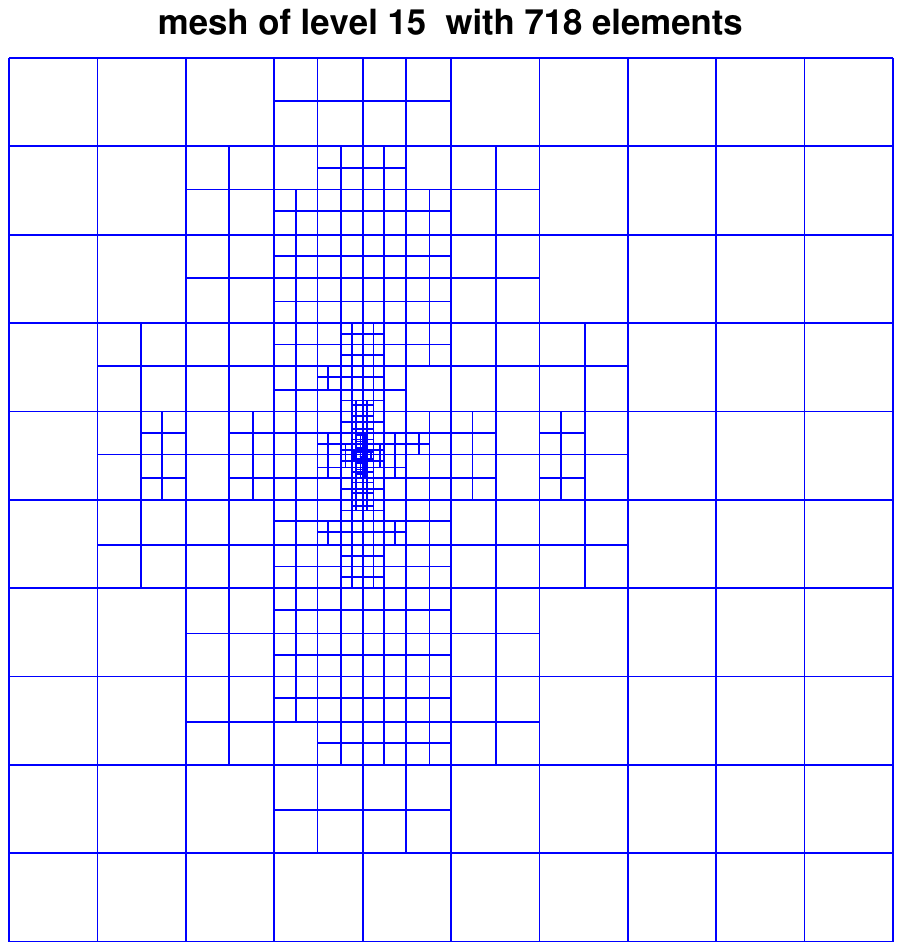}~~
  \includegraphics[width=.21\textwidth]{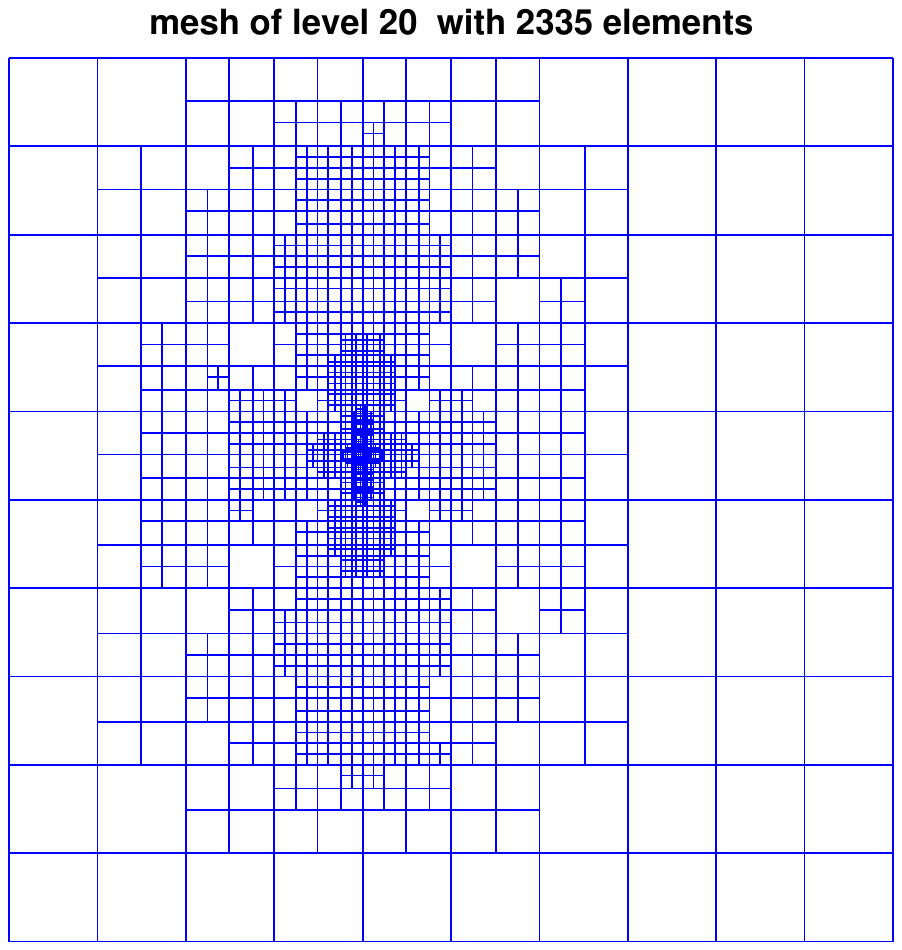}~~
  \includegraphics[width=.21\textwidth]{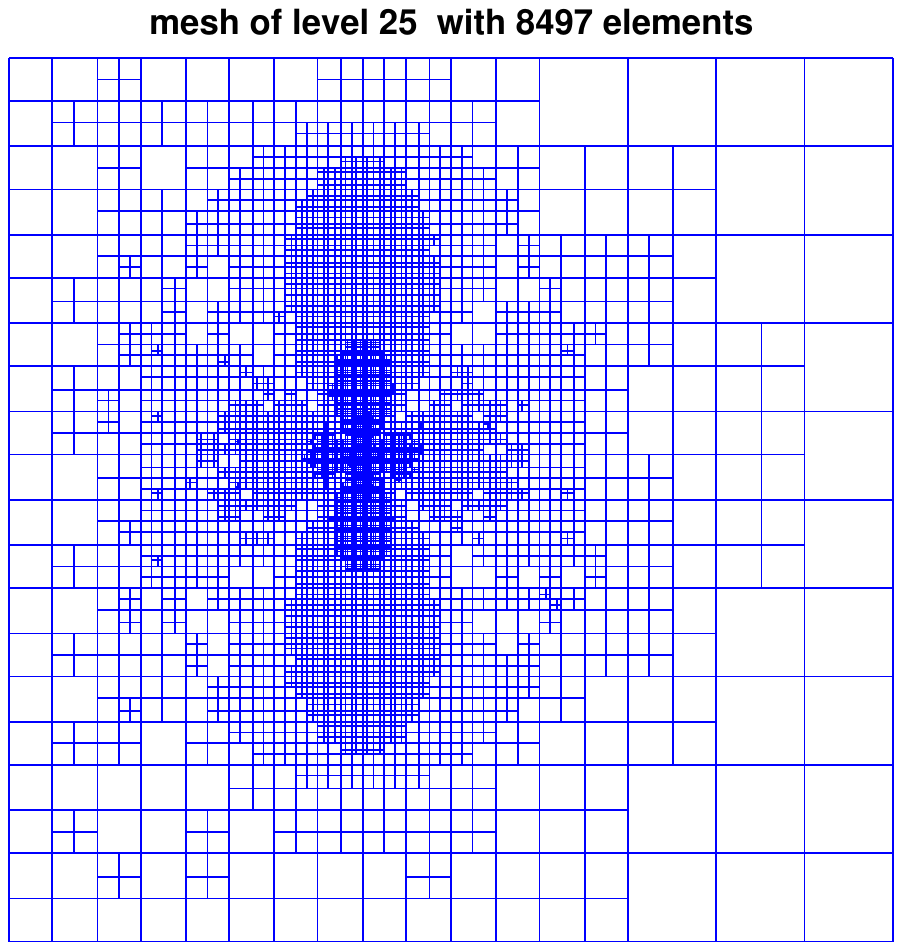}\\
  \caption{Solution meshes: $\mathcal{T}_h^{(10)}$, $\mathcal{T}_h^{(15)}$,$\mathcal{T}_h^{(20)}$, and $\mathcal{T}_h^{(25)}$}
  \label{fig: mesh low regularity}
\end{figure}

\section{Conclusion}
In this article, we establish {\it a priori} error estimates for interior penalty discontinuous Galerkin methods with immersed finite element functions for elliptic interface problem. The method can be used on Cartesian meshes that are independent of the interface. The analysis here shows that the order of convergence of these DG-IFE methods is optimal in the energy norm from the point of the polynomial degree in the finite element spaces. With the enhanced stability, these DG-IFE methods outperform the
the classic Galerkin IFE methods, especially in vicinity of the interface across which the exact solution is usually less smooth. The proposed DG-IFE schemes allow efficient local mesh refinement while preserving the Cartesian structure of meshes provided that {\it a posteriori } error estimators are available.

\subsubsection*{Acknowledgement} The authors would like to thank anonymous referees whose comments and suggestions enhanced the presentation of our research work.

\bibliographystyle{abbrv}

\end{document}